\documentclass[reqno,10pt]{amsart}
\oddsidemargin4mm
\evensidemargin4mm 
\textwidth16.0cm

\usepackage{amsmath}
\usepackage{amssymb}
\usepackage{amsfonts}

\usepackage{graphicx}
\usepackage[abs]{overpic}
\usepackage{caption}
\usepackage{subcaption}
\usepackage{wrapfig}
\usepackage{float}
\usepackage{graphicx}
\usepackage{physics}
\usepackage{esint} 
\usepackage{braket} 

\newcommand{\HH}{\mathcal{H}} 

\usepackage{tikz}
\usetikzlibrary{decorations.pathreplacing,calc}
\def\centerarc[#1](#2)(#3:#4:#5)
{ \draw[#1] ($(#2)+({#5*cos(#3)},{#5*sin(#3)})$) arc (#3:#4:#5); }

\definecolor{blue_links}{RGB}{13,0,180} 

\usepackage{hyperref}
\hypersetup{
    colorlinks=true, 
    linktoc=all,     
    linkcolor=blue_links,  
    citecolor=blue_links,
    urlcolor=blue_links,
}

\usepackage{mathtools}

\usepackage{xcolor}

\newtheorem{theorem}{Theorem}[section]
\newtheorem{lemma}[theorem]{Lemma}
\newtheorem{proposition}[theorem]{Proposition}

\newtheorem{definition}[theorem]{Definition}

\newtheorem{remark}[theorem]{Remark}

\newtheorem*{theorem*}{Theorem}

\newcommand{\N}{\mathbb{N}}

\newcommand{\R}{\mathbb{R}}
\newcommand{\C}{\mathbb{C}}
\newcommand{\CC}[1]{\C #1:#1}
\newcommand{\A}[1]{\abs{#1}_{\mathbb{C}}}

\newcommand{\BBB}{\color{black}} 
 
\newcommand{\EEE}{\color{black}}


\parskip2mm

\def\eps{\varepsilon}

\def\XXint#1#2#3{{\setbox0=\hbox{$#1{#2#3}{\int}$}
\vcenter{\hbox{$#2#3$}}\kern-.5\wd0}}


\usepackage{color}



\numberwithin{equation}{section}

\begin{document} 

\title[Strong existence for free discontinuity problems in linear elasticity]{Strong existence for free discontinuity problems\\ in linear elasticity}
\author[M. Friedrich]{Manuel Friedrich} 
\address[Manuel Friedrich]{Department of Mathematics, Friedrich-Alexander Universit\"at Erlangen-N\"urnberg. Cauerstr.~11,
    D-91058 Erlangen, Germany, \& Mathematics M\"{u}nster,  
University of M\"{u}nster, Einsteinstr.~62, D-48149 M\"{u}nster, Germany}
\email{manuel.friedrich@fau.de}
\author[C. Labourie]{Camille Labourie}
\address[Camille Labourie]{Department of Mathematics, Friedrich-Alexander Universit\"at Erlangen-N\"urnberg. Cauerstr.~11,
D-91058 Erlangen, Germany}
\email{camille.labourie@fau.de}
\author[K. Stinson] {Kerrek Stinson} 
\address[Kerrek Stinson]{Hausdorff Center for Mathematics, University of Bonn, Endenicher Allee 62, 53115 Bonn,
Germany}
\email{kerrek.stinson@utah.edu}

\subjclass[2010]{49J45, 70G75,   74B10, 74G65, 74R10}
\keywords{Griffith functional, free discontinuity problem, linear elasticity, Ahlfors-regularity}

\begin{abstract}   
    In this note we show Ahlfors-regularity for a large class of quasiminimizers of the Griffith functional. This allows us to prove that, for a range of free discontinuity problems in linear elasticity with anisotropic, cohesive, or heterogeneous behavior, minimizers have an essentially closed jump set and are thus strong minimizers.
Our notion of quasiminimality is inspired by and generalizes previous notions in the literature for the Mumford-Shah functional, and comprises functions which locally close to the crack have at most a fixed percentage of excess crack relative to minimizers.
    As for minimizers of the Griffith functional treated by Chambolle, Conti, and Iurlano, our proof of Ahlfors-regularity relies on contradiction-compactness and an approximation result for $GSBD$ functions, showing the robustness of this approach with respect to generalization of bulk and surface densities. 
\end{abstract}
\EEE

\maketitle

\section{Introduction}
The equilibrium state of a brittle elastic material can be found by minimizing the Griffith fracture energy, as introduced in the context of linearized elasticity by {\sc Francfort and Marigo} \cite{FM}.
Letting $\Omega\subset \R^N$ be   a \EEE bounded open set with Lipschitz boundary,  representing \EEE the reference configuration of a solid, and $\C$ be a fourth-order elastic tensor, the Griffith energy is defined by
\begin{equation}\label{eqn:GriffithEnergy}
    \int_{\Omega} \! \C e(u):e(u) \dd{x} + \mathcal{H}^{N-1}(J_u),
\end{equation}
for a displacement $u \in GSBD^2  (\Omega) \EEE $, where $e(u)  := \EEE (\nabla u + \nabla u^T)/2$  denotes  the  symmetrized  gradient of $u$, $J_u$ is the associated crack set (i.e., the jump set  of $u$), \EEE and $\mathcal{H}^{N-1}$ is the $(N-1)$-dimensional surface measure. The energy (\ref{eqn:GriffithEnergy}) is a \emph{weak formulation} of the Griffith energy as it does not impose that the crack $J_u$ is closed. The \emph{strong formulation},  instead, \EEE assumes that the cracks are closed  in $\Omega$, and this `closedness'  acts as a  basal \EEE regularity property preventing the crack from being diffusely distributed throughout the material domain.

The goal of this paper is to show that a wide variety of free discontinuity problems in linear elasticity with anisotropic, (non-degenerate) cohesive, or heterogeneous behavior have strong minimizers.
Precisely, in  Theorem \ref{cor:aniso} we show that a $GSBD$ minimizer of the energy
\begin{equation}\label{main example}
    \int_{\Omega} f(x,e(u)) \dd{x} + \int_{J_u} \psi(x,u^+,u^-,\nu_u) \dd{\HH^{N-1}}
\end{equation}
has an essentially closed jump set,  i.e., \EEE $\mathcal{H}^{N-1}(\overline{J_u}\setminus J_u) = 0$, and thereby  is a \EEE minimizer of the strong formulation.
We frame our result as a `weak-to-strong' improvement in regularity for minimizers, given basic assumptions on the densities, i.e., $\psi \colon \Omega \times \R^N \times \R^N \times  \mathbb{S}^{N-1} \EEE \to  \R_+ \EEE$ and $f \colon \Omega \times \R^{N \times N}_{\mathrm{sym}} \to \R_+$ are Borel functions satisfying $M^{-1} \leq \psi \leq M$, and
\begin{equation*}
    \abs{f(x,\xi) - \C \xi:\xi} \leq M \left(1 + \abs{\xi}^{q}\right) \quad \text{ for all } x\in \Omega \text{ and } \xi \in \R_{\mathrm{sym}}^{N \times N},
\end{equation*}
for a fixed constant $M \geq 1$, an elastic tensor $\C$, and   an \EEE exponent $q \in (0,2)$. 
(Here, $\R^{N \times N}_{\mathrm{sym}} \subset \R^{N \times N}$ denotes the subset of symmetric matrices, $\mathbb{S}^{N-1} := \lbrace x\in \R^N \colon \, |x| = 1 \rbrace$,  and $\R_+ := [0,+\infty)$. \BBB Actually, we can also treat a slightly more general setting, cf.\ \eqref{eq_gxi}--\eqref{eq_gxi2} below.) \EEE  Our results also hold for local minimizers, i.e., minimizers with respect to compact perturbations on balls. \EEE  
Note that in this paper we do not address existence of  (weak) \EEE minimizers for \eqref{main example}  as this has been treated thoroughly  in the literature by \EEE {studying} compactness \cite{DalMaso:13, CC2020} and separately the lower semicontinuity of the bulk \cite{CC2020} and the \BBB surface term \cite{perugini, FPS}, \EEE see Remark \ref{rmk:...manuel}. \EEE

Let us mention that for functionals with control on the \emph{full} gradient and not only on its symmetrized part, some strong existence results generalizing the ones for the Mumford--Shah functional \cite{DCL} are  available, namely for energies with specific \BBB  anisotropic bulk densities  \cite{new-ref1, Fonseca-Fusco, new-ref2, new-ref3}  \EEE covering {both the scalar and the vectorial case}, {and for} surface energies with dependence on the \EEE traces \cite{BL}, in connection to  free {discontinuity} problems with Robin conditions. In this work, we extend these results to a large class of admissible bulk and surface densities, particularly requiring only a control on the symmetrized gradient.

\EEE

In dimension $N=2$ \cite{CFI} and $N\geq 2$  \cite{CCI, CC}, it has been shown that   minimizers of the Griffith energy (\ref{eqn:GriffithEnergy})  have an essentially closed jump set and  are minimizers of the strong formulation.
Practically, this  is achieved by proving that  the crack is locally \emph{Ahlfors-regular}, meaning  that \EEE it satisfies the \emph{uniform density estimate} 
\begin{equation}\label{eqn:AhlfIntro}
    C_{\rm Ahlf}^{-1} r^{N-1} \leq \HH^{N-1}\big(J_u \cap B(x_0,r)\big) \leq C_{\rm Ahlf} r^{N-1}
\end{equation}
in any ball $B(x_0,r) \subset \Omega$ centered at {$x_0\in \overline{J_u}$}  with radius $0 < r \leq r_0$, for some uniform constants $r_0 > 0$ and $C_{\rm Ahlf} \geq 1$. \EEE

To  prove  that minimizers of \eqref{main example} have closed crack, we will show that property  \eqref{eqn:AhlfIntro}  is inherited by more than just   minimizers \EEE of the energy \eqref{eqn:GriffithEnergy}, and extends to a flexible class of functions called \emph{quasiminimizers}. Then, we simply show that  (local) minimizers \EEE of \eqref{main example} are specific examples of quasiminimizers. This larger class of functions throws away the unnecessary information carried by the densities $f$ and $\psi$, and states that (up to a deviation/gauge) $u$ is an $M$-quasiminimizer if for all (local) competitors $v$ it holds that
$$\int_{\Omega} \CC{e(u)} \dd{x} + M^{-1}\mathcal{H}^{N-1}(J_u) \leq \int_{\Omega} \CC{\BBB e(v) \EEE } \dd{x} + M\mathcal{H}^{N-1}(J_v), $$
see Definition~\ref{defi_quasi}.
Formally, a function is a quasiminimizer if, away from the crack, it has near minimal elastic energy and, close to the crack, it (locally) has at most a fixed percentage of excess crack relative to a local minimizer of (\ref{eqn:GriffithEnergy}).
We emphasize that, in contrast to minimizers, \EEE the crack set of a quasiminimizer is in general  not smooth for reasons of bi-Lipschitz invariance, see Remark \ref{rmk:bilip}. Still,  as we show in this paper, \EEE   it satisfies \EEE  some \EEE quantitative properties with uniform bounds,  namely \EEE Ahlfors-regularity.

Our analysis will follow two parallel notions of quasiminimizer: one in the $GSBD$ setting and one in the classical setting of pairs $(u,K)$ where $K$ is a closed set and $u$ is locally Sobolev outside of $K$. For quasiminimizers in {both} settings, we will show that the crack is Ahlfors-regular and satisfies a density estimate of the form (\ref{eqn:AhlfIntro}), see Theorem \ref{thm_af}. Note that, while in the case of pairs the crack $K$ is already closed,  \BBB the Ahlfors-regularity \eqref{eqn:AhlfIntro} is important and provides additional information. In particular, it is the starting point for studying finer regularity properties of crack sets  of    \EEE minimizers, see \cite{FLS}. 

To obtain Ahlfors-regularity of the crack set, our approach is motivated by the seminal paper of {\sc De Giorgi et al.} \cite{DCL}, wherein a contradiction-compactness argument is used to show that in regions with small crack, the energy in a ball $B(x_0,r)$ decays on the order of $r^{N}$ (like a bulk energy), much faster than the surface energy scaling $r^{N-1}$. Accounting for serious technical difficulties arising from the vectorial nature of the Griffith energy, the contradiction-compactness was also used by {\sc Chambolle et al.} \cite{CCI} for the Griffith energy   (see also \cite{CFI} for the two-dimensional case). \EEE Therein, an approximation result for $GSBD$ functions with small crack was introduced, which will also be an essential tool in our argument, see Lemma \ref{lem_compactness}. Going beyond these results, our argument will show that the contradiction-compactness argument is robust and able to absorb large perturbations in the surface energy.

Informed by the work of {\sc Bonnet} on the Mumford--Shah functional \cite{Bonnet}, where it was shown that blow-up limits are only minimal with respect to competitors satisfying topological constraints, we also cover a weaker minimality condition called \emph{topological quasiminimality},  see Definition \ref{def: main def}.
In this setting, Theorem \ref{thm_af} provides the Griffith counterpart of {\sc Siaudeau}'s result \cite{Si}, which considers topological quasiminimizers in the scalar setting (see also \cite[Chapter 72]{David}).
As an application, our result proves Ahlfors-regularity of the crack set for Griffith global minimizers  in $\R^N$, \EEE see \cite[Definition 7.1]{LL2}.
A technical hurdle will be to adapt the aforementioned approximation result introduced in \cite{CCI} to the case of admissible pairs, which always require closed cracks.
Furthermore, we will have to investigate limits of sequences with vanishing jump sets under the weaker assumption of topological quasiminimality.
\EEE

Finally, we remark that our notion of quasiminimality is inspired by and generalizes \cite[Definition~7.21]{David} and \cite[Definition~2.1]{BL} used in the Mumford--Shah setting, see Remark \ref{rmk:otherQuasi}.
In particular, {\sc Bucur and Luckhaus} \cite{BL} obtain Ahlfors-regularity of scalar almost quasiminimizers by deriving a monotonicity formula, which is interesting as an alternative approach to the {\sc De Giorgi et al.} \cite{DCL} framework, but it is not clear how such a monotonicity formula could be found in the vectorial setting of linearized elasticty.

The  paper \EEE is organized as follows. In Section \ref{sec:mainresults}, we introduce the definition of quasiminimizer and state the main results: Ahlfors-regularity for quasiminimizers and existence of strong solutions for free discontinuity problems. Here we show that the latter is a consequence of the former. In Section \ref{sec:density lower bound}, we prove that the Griffith energy decays like a bulk energy in regions with small crack and use this to show that local minimizers of \eqref{main example} \BBB are specific examples of quasiminimizers of \eqref{eqn:GriffithEnergy}, hence guaranteeing an \EEE Ahlfors-regular crack as in (\ref{eqn:AhlfIntro}). 
The decay of the energy relies on an approximation result whose proof is discussed in Section \ref{sec:Approx}.

\section{Main results}\label{sec:mainresults}

\subsection{Notation}
 For  $N \geq 2$ let $\Omega\subset\R^N$ be an open set.
We say that a constant is universal if it only depends on $N$. We  use $|\cdot|$ for the Euclidean norm of a vector,  and \EEE given two matrices $\xi, \eta \in \R^{N \times N}$, the notation $\xi : \eta$ denotes the Frobenius inner product of $\xi$ and $\eta$,  i.e., \EEE
\begin{equation*}
    \xi : \eta := \sum_{i,j=1}^N \xi_{ij} \eta_{ij}.
\end{equation*}
The Frobenius norm is given by $\abs{\xi} := \sqrt{\xi : \xi }$. Given $a, b \in \R^N$,  we let \EEE $a \odot b := ((a_i b_j + b_i a_j)/2)_{ij} \in \R^{N \times N}$.  For \EEE the entire paper, we fix a coercive fourth-order tensor $\C \colon \R^{N \times N} \to \R^{N \times N}$ such that 
\begin{equation*}
    \C(\xi - \xi^T) = 0 \quad \text{and} \quad \C \xi : \xi \geq   c_0 \EEE   \abs{\xi + \xi^T}^2 \quad \quad \text{for all $\xi \in \R^{N \times N}$}
\end{equation*}
for some constant $c_0  > 0 \EEE $.
Note that $\C$ defines a scalar product on the space $\R^{N \times N}_{\mathrm{sym}}$ of symmetric matrices. 
We say that a linear map $\R^N \ni x \mapsto a(x):= Ax + b$, where $A\in \mathbb{R}^{N\times N}$ and $b\in \R^N$, is a \emph{rigid motion} if $A$ belongs to $\R^{N\times N}_{\rm skew}$, the set of skew symmetric matrices.
We use $B(x,r)$ for the open ball of radius $r>0$ centered at $x\in \mathbb{R}^N.$ We denote by $\omega_N$ and $\sigma_{N-1}$ the volume of the $N$-dimensional unit ball and the surface area of the $(N-1)$-dimensional sphere,  respectively.  By \EEE an abuse of notation, {we also denote by $|\cdot|$ the Lebesgue measure,} and  we write \EEE $\mathcal{H}^{N-1}$ for the $(N-1)$-dimensional Hausdorff measure.  The symmetric difference of two sets $V_1$ and $V_2$ is denoted by $V_1 \triangle V_2$. \EEE {For a set $V$, we denote the associated characeristic function by $\chi_V$.}
For a set of finite perimeter $V$, $\partial^*V$ denotes the reduced boundary, see \cite[ Definition~2.60\EEE]{AFP}.

In the following, we consider functions in the \EEE space of \emph{generalized special functions of bounded deformation}, denoted by $GSBD(\Omega)$, which consists of measurable maps $u\colon \Omega \to \R^N$ having integrable symmetric derivative $e(u)$ and a rectifiable jump set $J_u$, where both of these quantities are interpreted via slicing. We refer to \cite{DalMaso:13} for the precise definition. We denote by $GSBD^2(\Omega)$ those functions $u$ belonging to $GSBD(\Omega)$ such that $e(u) \in L^2(\Omega;\R^{N\times N})$ and $\mathcal{H}^{N-1}(J_u) < \infty.$ Naturally, $GSBD^2_{\rm loc}(\Omega)$ consists of functions belonging to $GSBD^2(\Omega')$ for all $\Omega' \subset \subset \Omega.$

\subsection{Quasiminimizers and Ahlfors-regularity}
We introduce our notion  of \EEE \emph{quasiminimality}, which requires a function to satisfy a minimality condition with respect to an appropriate class of competitors.

We introduce different notions of competitors for functions $u \in GSBD_{\mathrm{loc}}(\Omega)$ and for pairs $(u,K)$, consisting of a relatively closed subset $K \subset \Omega$ and a function $u \in W^{1,2}_{\mathrm{loc}}(\Omega \setminus K;\R^N)$.
\EEE

\begin{definition}[Competitors]\label{def:com}  Let $x_0\in \Omega$ and $r>0$ be such that $B(x_0,r)\subset\subset \Omega.$
\begin{enumerate}
\item[1)] A function $v \in GSBD^2_{\mathrm{loc}}(\Omega)$ is a \emph{$GSBD$ competitor} of $u$ in $B(x_0,r)$ if $\set{v \ne u} \subset \subset B(x_0,r)$.

\item[2)] 
A pair $(v,L)$ in $\Omega$ is a \emph{competitor} of $(u,K)$ in $B(x_0,r)$ if
\begin{equation}\label{eqn:compCond}
    L \setminus B(x_0,r) = K \setminus B(x_0,r) \quad \text{and} \quad v = u \quad \text{a.e. in} \quad \Omega \setminus \big(K \cup B(x_0,r)\big).
\end{equation}
\item[3)] 
A pair $(v,L)$ in $\Omega$ is a \emph{topological competitor} of $(u,K)$ in $B(x_0,r)$ if
(\ref{eqn:compCond}) holds and in addition  the following condition {is satisfied}: \EEE
\begin{equation}\label{eq_topological}
    \text{all $x,y \in \Omega \setminus \big(K \cup B(x_0,r)\big)$ that are separated by $K$ are also separated by $L$}.
\end{equation}
This means that, if $  x, y \EEE \in \Omega \setminus (K \cup B(x_0,r))$ belong to different connected components of $\Omega \setminus K$, they also belong to different connected components of $\Omega \setminus L$.
\end{enumerate}
\end{definition}

We say that a pair $(u,K)$ is \emph{coral} provided that for all $x_0 \in K$ and for all $r > 0$ we have
\begin{equation}\label{eq_coral}
    \HH^{N-1}\big(K \cap B(x_0,r)\big) > 0.
\end{equation}
For a given pair $(u,K)$, there is always a coral representative. In particular, such a representative gets rid of any isolated points of $K$ which do not affect the energy but would prevent an estimate of the form (\ref{eqn:AhlfIntro}) from holding at all points in the crack.  We say a pair $(u,K)$ has \emph{locally finite energy} if $\Vert e(u) \Vert_{L^2(B(x_0,r))}  + \mathcal{H}^{N-1}(K \cap B(x_0,r)) <\infty $ for all $B(x_0,r)\subset\subset \Omega.$ \EEE  We define a \emph{gauge} as a nondecreasing function $h \colon (0,+\infty) \to [0,+\infty]$ with $\limsup_{r \to 0} h(r) < \infty$.

\begin{definition}[Quasiminimizers]\label{def: main def} 
Let $M \geq 1$ be a constant and let $h$ be a gauge. 
\begin{enumerate}
    \item[1)] \label{defi_quasi}
    A function $u \in GSBD^2_{\mathrm{loc}}(\Omega;\R^N)$ is a \emph{ $GSBD$ \EEE local $M$-quasiminizer} with gauge $h$ in $\Omega$ if for any open ball $B(x_0,r)  \subset \subset \EEE \Omega$ and all $GSBD$ competitors $v$ of $u$ in $B(x_0,r)$ we have
    \begin{align}
    \int_{B(x_0,r)} & \CC{e(u)} \dd{x} + M^{-1} \HH^{N-1}\big(J_u \cap B(x_0,r)\big) \nonumber \\ 
    &\leq \int_{B(x_0,r)} \CC{e(v)} \dd{x} + M \HH^{N-1}\big(J_v \cap B(x_0,r)\big) + h(r) r^{N-1}. \label{eq_GSBD_quasi}
    \end{align}
   
\item[2)] \label{defi_david_quasi}
    A coral pair $(u,K)$  with locally finite energy \EEE is a \emph{Griffith {local} $M$-quasiminimizer} with gauge $h$ in $\Omega$ if for any open ball $B(x_0,r)  \subset \subset \EEE \Omega$ and {all competitors} $(v,L)$ of $(u,K)$ in $B(x_0,r)$ we have
    \begin{align}
        \int_{B(x_0,r) \setminus K} & \CC{e(u)} \dd{x} + M^{-1} \HH^{N-1}\big(K \cap B(x_0,r)\big) \nonumber \\ 
        & \leq \int_{B(x_0,r) \setminus L} \CC{e(v)} \dd{x} + M \HH^{N-1}\big(L \cap B(x_0,r)\big) + h(r) r^{N-1}. \label{eq_david_quasi}
    \end{align}
    
    \item[3)] 
    {A coral pair $(u,K)$   with locally finite energy \EEE is a \emph{Griffith local topological $M$-quasiminimizer} with gauge $h$ in $\Omega$ if for any open ball $B(x_0,r)  \subset \subset \EEE \Omega$ and all topological competitors $(v,L)$ of $(u,K)$ in $B(x_0,r)$ inequality (\ref{eq_david_quasi}) holds.}
    \end{enumerate}
\end{definition}

For convenience, we will  often  just say `quasiminimizer' instead of `Griffith local  $M$-quasiminimizer'. As quasiminimal pairs are also topological quasiminimizers, our  study \EEE of regularity will focus on topological quasiminimizers. However, for our discussion in Remark \ref{rmk:bilip} below, it is convenient to {distinguish} this stronger notion of quasiminimal pair. \EEE In the above definition, the gauge plays the role of a normalized deviation from minimality.  Whereas in many works a certain decay is needed, i.e.,  gauges of the form  $h(r) = r^\alpha$ for $\alpha >0$ are considered, we allow for a wide class of gauges. In particular, we do not  require \EEE $\lim_{r \to 0} h(r) = 0$. \EEE The minimality property above is \emph{local} since conditions \eqref{eq_GSBD_quasi} and \eqref{eq_david_quasi} only hold in balls away from the boundary and it does not refer to an underlying Dirichlet condition on $\partial \Omega$.

Quasiminimizers satisfy an energetic upper bound. For example, given   a $GSBD$ $M$-quasiminimizer   $u$ \EEE with gauge $h$, for all open balls $B(x_0,r) \subset \Omega$, we have
\begin{equation}\label{eq_Gupperbound}
    \int_{B(x_0,r)} \CC{e(u)} \dd{x} + M^{-1} \HH^{N-1}(J_u \cap B(x_0,r)) \leq (M \sigma_{N-1} + {h(r)}) r^{N-1}.
\end{equation}
\BBB This is a straightforward application of (\ref{eq_GSBD_quasi}) with $v := u  \chi_{\Omega \setminus B(x_0,\rho)} \EEE$ for $\rho < r$. Precisely, $v$ is a competitor of $u$ in $B(x_0,t)$ for all $t \in (\rho,r)$. One applies the quasiminimality condition in $B(x_0,t)$ and then let $t \to \rho$, and next $\rho \to r$ to deduce the above bound. \EEE
For topological quasiminimizers, one has to check  additionally that the topological condition \eqref{eq_topological} is satisfied by $L := \left(K \setminus B(x_0,\rho)\right) \cup \partial B(x_0,\rho)$, but this is clear.

A primary result of this paper is the following theorem, which in conjunction with \eqref{eq_Gupperbound} shows that the crack set of a quasiminimizer is (locally) Ahlfors-regular, and in particular the surface energy is proportional to an $(N-1)$-dimensional surface at a positive and uniform scale.
\begin{theorem}\label{thm_af}
    For each $M \geq 1$, there exist constants $\varepsilon_{\rm Ahlf} \in (0,1)$ and $C_{\rm Ahlf} \geq 1$ (depending on $N$, $\C$, $M$) such that the following holds.
    \begin{enumerate}
        \item[1)] Let $u$ be a GSBD $M$-quasiminimizer in $\Omega$ with any gauge $h$ in the sense of Definition~\ref{defi_quasi}. Then, for all {$x_0 \in \Omega \cap \overline{J_u}$} and $r > 0$ such that $B(x_0,r) \subset \Omega$ and $h(r) \leq \varepsilon_{\rm Ahlf}$, we have
    \begin{equation*}
        \HH^{N-1}(J_u \cap B(x_0,r)) \geq C_{\rm Ahlf}^{-1} r^{N-1}.
    \end{equation*}
     In particular, if $\limsup_{r \to 0} h(r) < \varepsilon_{\rm Ahlf}$, \EEE the jump set is essentially closed,  i.e., 
     \begin{equation}\label{eqn:essClosed}
    \mathcal{H}^{N-1}\big((\overline{J_u}\setminus J_u) \cap \Omega \big) = 0.
    \end{equation}
    \item[2)]  Let $(u,K)$ be a topological $M$-quasiminimizer  in $\Omega$ \EEE with any gauge $h$ in the sense of Definition~\ref{defi_david_quasi}.
        Then, for all {$x_0 \in K$} and $r > 0$ such that $B(x_0,r) \subset \Omega$ and $h(r) \leq \varepsilon_{\rm Ahlf}$, we have
    \begin{equation*}
        \HH^{N-1}(K \cap B(x_0,r)) \geq C_{\rm Ahlf}^{-1}r^{N-1}.
    \end{equation*}
    \end{enumerate}
\end{theorem}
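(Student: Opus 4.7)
\textbf{Proof plan for Theorem \ref{thm_af}.}

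The overall strategy follows the contradiction-compactness scheme of {\sc De~Giorgi--Carriero--Leaci} \cite{DCL} in the scalar case, adapted to linear elasticity as in {\sc Chambolle--Conti--Iurlano} \cite{CCI}, with the crucial feature that the quasiminimality gap $M^{-1}$ versus $M$ and the gauge term $h(r)r^{N-1}$ must be absorbed in the limit. The plan is to prove, as the engine of the argument, a \emph{density-to-decay} lemma: there exist a threshold $\varepsilon_0=\varepsilon_0(N,\C,M)>0$ and constants $\theta\in(0,1)$, $C_0\geq 1$ such that if $u$ is a GSBD $M$-quasiminimizer and $B(x_0,r)\subset\subset\Omega$ satisfies
\[
    \HH^{N-1}\bigl(J_u\cap B(x_0,r)\bigr)\leq \varepsilon_0 \, r^{N-1}
    \quad\text{and}\quad h(r)\leq \varepsilon_0,
\]
then
\[
    \HH^{N-1}\bigl(J_u\cap B(x_0,\theta r)\bigr)\leq \tfrac{1}{2}\,\varepsilon_0\,(\theta r)^{N-1}.
\]
Granted this, one iterates on dyadic scales $\theta^k r$: the smallness of the surface density is preserved and in fact improves, so $\HH^{N-1}(J_u\cap B(x_0,\rho))/\rho^{N-1}\to 0$ as $\rho\to 0$. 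Combined with the upper bound \eqref{eq_Gupperbound} and standard density characterizations of $\overline{J_u}$, this forces $x_0\notin\overline{J_u}$, contradicting the standing hypothesis. The contrapositive yields the desired lower bound with $C_{\rm Ahlf}=\varepsilon_0^{-1}$, and \eqref{eqn:essClosed} then follows from the Ahlfors density estimate via a standard Vitali covering argument.

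To prove the density-to-decay lemma, I would argue by contradiction: suppose there are $M$-quasiminimizers $u_n$, balls $B(x_n,r_n)$, and gauges $h_n$ with $h_n(r_n)\to 0$ such that the surface densities $\varepsilon_n := \HH^{N-1}(J_{u_n}\cap B(x_n,r_n))/r_n^{N-1}\to 0$ yet the improved estimate fails at scale $\theta r_n$. Translate and rescale so that the ball is the unit ball and normalize the elastic energy by the factor $\varepsilon_n^{1/2}$; then apply the $GSBD$ approximation result (Lemma \ref{lem_compactness} invoked in the introduction) to replace $u_n$ by $W^{1,2}$ functions $v_n$ which differ from $u_n$ on a set of vanishing measure and whose elastic energies are controlled by those of $u_n$. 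A modification of $v_n$ by a suitable rigid motion $a_n$ yields (after extracting a subsequence) a limit $v_\infty\in W^{1,2}_{\rm loc}(B_1;\R^N)$ solving a homogeneous linear elasticity system with tensor $\C$ on $B_1$, via comparison with competitors built from $v_\infty$ using a cut-off and the jump set estimate. Standard $C^{1,\alpha}$ decay for solutions of linear elasticity systems then gives the $r^N$-decay of the rescaled elastic energy on $B_\theta$. Testing the quasiminimality inequality against the competitor $u_n\chi_{\Omega\setminus B(x_n,\theta r_n)} + v_\infty(\cdot/r_n)\chi_{B(x_n,\theta r_n)}$ (glued with a thin surface layer) shows that the surface energy of $u_n$ at scale $\theta r_n$ must also be small, yielding the sought improvement and hence the desired contradiction.

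The main obstacles lie in two places. First, in the compactness step the jump sets of $u_n$ shrink to nothing but control only on the symmetrized gradient is available; here the use of Lemma \ref{lem_compactness} is indispensable, as it produces Sobolev replacements on large subsets while paying only an error proportional to the small surface density $\varepsilon_n$, so that rigid-motion modification and Korn--Poincar\'e yield the required $L^2$ compactness of $v_n - a_n$. Absorbing the factor $M$ (not $1$) on the surface side of the quasiminimality inequality and the additive $h_n(r_n)r_n^{N-1}$ term is delicate but harmless, since both quantities vanish against the $r^N$-scale after rescaling, which is precisely where our generalization over \cite{CCI} is gained.

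Second, and more seriously, is statement 2) for \emph{topological} quasiminimizers. Here we may not simply ``erase'' $K$ inside $B(x_0,\theta r)$ because this might create a forbidden topological competitor. I would adapt the approximation and competitor construction following the scalar scheme of {\sc Siaudeau} \cite{Si} and {\sc Bonnet} \cite{Bonnet}: instead of removing $K$ entirely, one retains a small $(N-1)$-dimensional ``shield'' on $\partial B(x_0,\rho)$ at a well-chosen radius $\rho\in(\theta r, r)$ where the slice measure of $K$ is controlled (via a Chebyshev-type radius selection), and extends $v_\infty$ outside $B(x_0,\rho)$ by $u$. One must check that the resulting pair $(v,L)$ is a topological competitor, which follows because $L$ contains the original $K$ outside $B(x_0,\rho)$ together with a spherical cap that only reconnects pieces already connected in $\Omega\setminus K$. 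The extra surface added by the shield is $O(\rho^{N-1})$ with a small constant from the radius-selection, and can be absorbed into the density estimate for $\varepsilon_0$ chosen small enough. The analogous approximation result for admissible pairs, alluded to in the introduction, will furnish the $W^{1,2}$ replacements needed for passage to the limit.
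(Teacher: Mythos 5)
Your plan hinges on a ``density-to-decay'' lemma asserting that $\HH^{N-1}(J_u\cap B(x_0,r))\leq \varepsilon_0 r^{N-1}$ and $h(r)\leq\varepsilon_0$ force the \emph{surface density itself} to improve by a fixed factor at scale $\theta r$, and this is where the argument breaks. The contradiction-compactness machinery you invoke cannot deliver that conclusion: after passing to the limit, the comparison furnished by quasiminimality carries errors that are only $o(1)\,r^{N-1}$ (the gluing/cut-off error, the approximation error from Lemma \ref{lem_compactness}) plus the gauge term $h(r)r^{N-1}$, for which you only assume the absolute bound $h(r)\leq\varepsilon_0$. Since the quantity you must beat is $\tfrac12 \varepsilon_0(\theta r)^{N-1}$ (and it enters the inequality with the unfavorable weight $M^{-1}$), the gauge term alone, of size up to $\varepsilon_0 r^{N-1}$, already swamps the target, and along your contradiction sequence (where the densities, hence the admissible threshold, tend to $0$) the $o(1)$ errors are infinitely larger than the surface quantity you are trying to halve. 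Your normalization by $\varepsilon_n^{1/2}$ compounds the problem: the elastic energies are of order $r_n^{N-1}$, not $\varepsilon_n r_n^{N-1}$, so the rescaled sequence is not bounded, and the rescaled gauge error $h_n(r_n)/\varepsilon_n$ need not vanish. This is precisely why the paper does not attempt an absolute improvement of the surface density: its engine (Proposition \ref{prop_decay}) is a decay estimate for the \emph{full} Griffith energy normalized by its own size, $G(x_0,\tau r)\leq C\tau^N G(x_0,r)$, valid under the \emph{relative} gauge condition $h(r)\leq\varepsilon\, G(x_0,r)/r^{N-1}$ (so that after dividing by $G_n(0,1)$ the gauge error genuinely vanishes, and the surface term only needs to become negligible relative to the total energy). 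The surface lower bound is then obtained indirectly: first a lower bound $G(x,r)\geq\bar C^{-1}r^{N-1}$ at points of positive upper density, via an iteration with a dichotomy that handles the regime where the relative gauge condition fails by the trivial monotonicity of $G$; then a contradiction between this lower bound, the energy upper bound \eqref{eq_Gupperbound}, and the decay. Your scheme has no counterpart of this dichotomy or of the energy lower bound, and without them the gauge cannot be absorbed.

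Two further points. The final step ``density ratio tends to $0$ at $x_0$, hence $x_0\notin\overline{J_u}$'' is not a standard density fact as stated: one must first reduce to the set of points of $J_u$ (resp.\ $K$) with upper density at least $2^{-(N-1)}\omega_{N-1}$ and show its closure recovers $\overline{J_u}$ (resp.\ $K$, using corality), and in the $GSBD$ case one additionally needs the Korn--Campanato argument of the appendix to rule out points of $J_u$ lying in balls where the jump has zero measure; this is a genuine ingredient, not a remark. On the positive side, your treatment of the topological constraint (retaining a spherical ``shield'' on $\partial B(0,\rho)$ at a coarea-selected radius, with area controlled via the isoperimetric inequality applied to the Caccioppoli partition into components) is exactly the device used in the paper's compactness proposition, so that part of the proposal is sound once it is embedded in the correct decay framework.
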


 The proof will be given in Section \ref{sec:density lower bound}. \EEE We now make a variety of remarks on the scope of   quasiminimality.

\begin{remark}[Bi-Lipschitz invariance] \label{rmk:bilip}
{\normalfont
Topological quasiminimizers are related to quasiminimal sets, introduced by {\sc David and Semmes} \cite[Definition 1.9]{DS}, which is a generalization of {\sc Almgren}'s `restricted sets' \cite[Definition II.1]{Almgren}. In essence, a set $K$ is quasiminimal if it satisfies
\begin{equation}\label{eq_quasi_set}
    {M^{-1}}\HH^{N-1}(K \cap B(x_0,r)) \leq {M}\HH^{N-1}(L \cap B(x_0,r))
\end{equation}
for all ball $B(x_0,r) \subset \subset \Omega$ and all topological competitors $(0,L)$ of \BBB $(0,K)$ \EEE in $B(x_0,r)$.
As the condition (\ref{eq_quasi_set}) is stable  under a bi-Lipschitz mapping $\phi$, in the sense that $\phi(K)$ is also a quasiminimal set (with a bigger constant), any regularity property that holds true for a quasiminimal set must be preserved under bi-Lipschitz mappings.  Consequently,  since the pair $(0,K)$ is a topological quasiminimizer as in Definition \ref{defi_david_quasi}, the regularity theory for Griffith {topological} quasiminimizers is   limited  in a similar fashion  and in general smooth crack sets cannot be expected.   We note that this limitation is not an artifact of the topological constraint and {also} applies {{to} the stronger notion of quasiminimizer} in the sense of {Definition \ref{defi_quasi}, 2)}. Precisely, one can show that quasiminimizers of the Mumford--Shah functional remain quasiminimal  under small bi-Lipschitz perturbations, \EEE   and then identify a quasiminimizer of the Mumford--Shah functional with a Griffith quasiminimizer having only antiplanar shear.
}
\end{remark}

In the case that $M=1$ and {$\lim_{r \to 0} h(r) = 0$}, a quasiminimizer is called an \emph{almost-minimizer}  of the Griffith energy. {If in addition} $h\equiv 0$, we use the term  \emph{local minimizer}. \EEE
{In contrast to Remark \ref{rmk:bilip}, we expect that the crack of an almost-minimizer is much more than just Ahlfors-regular, as one inherits properties associated with minimality at sufficently small scales.} 
We refer to \cite{BIL,FLS,CL} for  finer \EEE regularity properties of (almost-)minimizers of the Griffith energy.
 We emphasize \EEE that we follow the terminology of {\sc David} \cite{David}, and our almost-minimizers with gauge $h(r) = C r^{\alpha}$ correspond to what is called quasiminimizers in \cite[Definition 7.17]{AFP} and \cite[Definition~5.1]{CC}.

\begin{remark}[Comparison to other notions of quasiminimality]\label{rmk:otherQuasi}{\normalfont
 (a)   We remark that Definition \ref{defi_david_quasi} is more general than the definition of quasiminimizers introduced by {\sc David} \cite[Definition 7.21]{David}.
    Translated  to the \EEE Griffith setting, the quasiminimality condition \cite[(7.20)]{David} for the pair $(u,K)$ says that, for any  competitor $(v,L)$,
    \begin{equation}\label{eq_david_quasi0}
        \HH^{N-1}(K \setminus L) \leq M \HH^{N-1}(L \setminus K) + \Delta E + a r^{N-1},
    \end{equation}
    where
    \begin{equation*}
        \begin{gathered}
            \Delta E  := \EEE \max \big\{E(v) - E(u), M \big(E(v) - E(u)\big) \big\} \text{  and } \EEE \\
            E(v) = \int_{  B(x_0,r) \EEE  \setminus L} \CC{e(v)} \dd{x}, \quad E(u) = \int_{  B(x_0,r) \EEE \setminus K} \CC{e(u)} \dd{x}.
        \end{gathered}
    \end{equation*}
     In the definition of $\Delta E$, note that $E(v) - E(u)$ can be positive or negative. \EEE 
    A direct computation shows that a quasiminimizer in this sense is a quasiminimizer in the sense of Definition \ref{defi_david_quasi}.
    {\sc David}'s quasiminimality condition \eqref{eq_david_quasi0} is natural in the context of the classical Griffith (or Mumford--Shah) energy, where the surface energy density of a crack is a constant, as it does   not take into account  the contribution of  $K \cap L$ present in both pairs $(u,K)$ and $(v,L)$. \EEE   On the other hand, our notion of quasiminimality in Definition \ref{defi_david_quasi} applies to local minimizers for a wide class of functionals, as in Theorem~\ref{cor:aniso} below. 
    
(b)   In \cite[Definition 2.1]{BL}, {\sc Bucur and Luckhaus} introduce the closely related notion of a ($\Lambda$, $\alpha$, $c_\alpha$) almost quasiminimizer for the Mumford--Shah functional. Though our definition is slightly more general  concerning the class of admissible gauges,  one sees that if $u$ is an $M$-quasiminimizer in our sense (adapted to the Mumford--Shah setting) with gauge $h(r):= {h(1)} r^\alpha$, $\alpha \in (0,1)$, then $\sqrt{M}u$ is an ($M^2$, $\alpha$, {$M h(1)$}) almost quasiminimizer.} 
\end{remark}

We finally discuss the difference between the different notions of quasiminimality given  in  Theorem~\ref{thm_af}.

\begin{remark}[Topological versus $GSBD$ quasiminimality]
{\normalfont
According to Theorem \ref{thm_af}, if $u$ is a $GSBD$ quasiminimizer with a small gauge $h$, then the pair $(u,\overline{J_u})$ is a quasiminimizer in the classical sense.
Reciprocally, for a quasiminimizer $(u,K)$,  one can check that $u \in GSBD^2_{\mathrm{loc}} (\Omega) \EEE $ and that {$K  = \overline{J_u}$}  (with strict inclusion $ \overline{J_u} \subset K$, a small ball could be removed from $K$ contradicting minimality). However \EEE when translating the minimality condition \eqref{eq_david_quasi} to the $GSBD$ setting, it compares the closure of the jump sets. Although this looks probable, it is not clear how to prove that $u$ is a $GSBD$ quasiminimizer as this  would require \EEE to compare  the jump sets and not their closure.
In the case where $(u,K)$ is {only} a topological quasiminimizer, $\overline{J_u}$ may even be a strict subset of $K$,  see \EEE  the example $(0,K)$ in Remark \ref{rmk:bilip}.
}
\end{remark}

 \subsection{Application to models in linear elasticity with surface discontinuities}

One of the strengths of Definition \ref{defi_quasi} is that it includes local minimizers of a large class of functionals. In particular, we show that $GSBD$ minimizers of generic fracture energies are in fact strong minimizers. We phrase our result in terms of the near minimal assumptions on the bulk and surface energy densities, and  briefly discuss in Remark \ref{rmk:...manuel}   specific examples where minimizers of the weak formulation have been shown to exist.

\begin{theorem}[`Weak-to-strong' regularity of local \EEE minimizers]\label{cor:aniso}
Let $u \in GSBD^2  (\Omega)  $ be a (local) \EEE minimizer of the functional
\begin{equation}\label{eqn:anisoEnergy}
        \int_{\Omega} f(x,e(u)) \dd{x} + \int_{J_u} \psi(x,u^+,u^-,\nu_u) \dd{\HH^{N-1}},
    \end{equation}
    where $\psi \colon \Omega \times \R^N \times \R^N \times  \mathbb{S}^{N-1} \EEE \to \R_+$ and $f \colon \Omega \times \R^{N \times N}_{\mathrm{sym}} \to \R_+$ are Borel functions satisfying $M^{-1} \leq \psi \leq M$,  $\psi(x,a,b,\nu) = \psi(x,b,a,  - \EEE \nu)$ for all $(x,a,b, \nu) \in \Omega \times \R^N \times \R^N \times  \mathbb{S}^{N-1} $,   and
    \BBB
    \begin{equation}\label{eq_gxi}
    \abs{f(x,\xi) - \CC{\xi}} \leq g(\abs{\xi}^2) \quad \text{ for all } x\in \Omega \text{ and } \xi \in \R_{\mathrm{sym}}^{N \times N},
    \end{equation}
where $g \colon [0,+\infty) \to [0,+\infty)$ is a nondecreasing function such that $\int_{1}^{+\infty} s^{-2} g(s) \dd{s} < +\infty$. \EEE
    Then $J_u$ is Ahlfors-regular (\ref{eqn:AhlfIntro}) and essentially closed \eqref{eqn:essClosed}.  In particular, $(u,\overline{J_u})$ is a (local) \EEE minimizer of the strong formulation associated with (\ref{eqn:anisoEnergy}).
\end{theorem}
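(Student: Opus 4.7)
The plan is to reduce Theorem~\ref{cor:aniso} to Theorem~\ref{thm_af} by showing that any (local) minimizer $u$ of \eqref{eqn:anisoEnergy} is a $GSBD$ local $M$-quasiminimizer in the sense of Definition~\ref{defi_quasi}, with gauge $h(r) = C r^{1-q/2}$. Since $q \in (0,2)$, this gauge satisfies $\lim_{r \to 0} h(r) = 0$, so for small $r$ we have $h(r) \leq \varepsilon_{\rm Ahlf}$ and Theorem~\ref{thm_af} applies.

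The first step is an a priori bulk bound: for $B(x_0, r) \subset\subset \Omega$ with $r$ small,
\begin{equation*}
\int_{B(x_0,r)} |e(u)|^2 \, dx \leq C r^{N-1},
\end{equation*}
with $C$ depending only on $N$, $\C$, $M$, $q$. I would obtain this by testing the minimality of $u$ against $v = u \chi_{\Omega \setminus B(x_0,\rho)}$ for $\rho < r$: the bound $f(x, 0) \leq M$ and $\psi \leq M$ on the newly created jump on $\partial B(x_0, \rho)$ yield $\int_{B(x_0,\rho)} f(x, e(u)) \, dx \leq M \omega_N \rho^N + M \sigma_{N-1} \rho^{N-1}$. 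Combining with the lower bound $f(x, \xi) \geq \C\xi:\xi - M(1+|\xi|^q) \geq 2c_0|\xi|^2 - C$ (using Young's inequality since $q < 2$, and $\C\xi:\xi \geq 4c_0|\xi|^2$ on symmetric $\xi$) and letting $\rho \to r$ gives the estimate.

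Next I verify quasiminimality. Fix a $GSBD$ competitor $v$ of $u$ in $B(x_0, r)$, and write $f(x,\xi) = \C\xi:\xi + R(x,\xi)$ with $|R(x,\xi)| \leq M(1+|\xi|^q)$. Using $F(u) \leq F(v)$ together with $M^{-1} \leq \psi \leq M$ yields
\begin{equation*}
\int_{B(x_0,r)} \!\C e(u):e(u) \, dx + M^{-1}\mathcal{H}^{N-1}(J_u \cap B(x_0,r)) \leq \int_{B(x_0,r)} \!\C e(v):e(v) \, dx + M \mathcal{H}^{N-1}(J_v \cap B(x_0,r)) + E,
\end{equation*}
where $E \leq 2M\omega_N r^N + M \int_{B(x_0,r)} |e(u)|^q \, dx + M \int_{B(x_0,r)} |e(v)|^q \, dx$. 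The $|e(u)|^q$ contribution is controlled via H\"older's inequality and the a priori bulk bound, yielding $C r^{N - q/2}$. For the $|e(v)|^q$ term I would argue by dichotomy: if $\int_{B(x_0,r)}\C e(v):e(v) \, dx + M \mathcal{H}^{N-1}(J_v \cap B(x_0,r))$ exceeds a suitably large multiple of $r^{N-1}$, the quasiminimality inequality is automatic from the universal upper bound \eqref{eq_Gupperbound} applied to $u$; otherwise $\int |e(v)|^2 \leq C r^{N-1}$ and H\"older gives $\int |e(v)|^q \leq C r^{N-q/2}$. In either case $E \leq C r^{N-1} r^{1-q/2}$, which is the desired gauge.

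Having established quasiminimality, part~1) of Theorem~\ref{thm_af} applied for $r$ small enough that $h(r) \leq \varepsilon_{\rm Ahlf}$ gives the density lower bound on $\overline{J_u}$; together with \eqref{eq_Gupperbound} this proves Ahlfors-regularity and essential closedness $\mathcal{H}^{N-1}(\overline{J_u}\setminus J_u) = 0$. For the strong minimality conclusion, any strong competitor $(v, L)$ (with $L$ closed) yields a $GSBD$ function $v$ satisfying $J_v \subset L$ up to an $\mathcal{H}^{N-1}$-null set, so the strong energy of $(v, L)$ dominates the weak energy of $v$; since the weak and strong energies of $(u, \overline{J_u})$ coincide thanks to \eqref{eqn:essClosed}, $(u, \overline{J_u})$ is a minimizer of the strong formulation. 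The main technical subtlety is the dichotomy/H\"older argument for the $|e(v)|^q$ perturbation, which is precisely what ensures the competitor's bulk energy enters the quasiminimality inequality with coefficient $1$, as required by Definition~\ref{defi_quasi}.
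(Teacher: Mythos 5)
Your proposal is correct and follows essentially the same route as the paper: comparison with $u\chi_{\Omega\setminus B(x_0,\rho)}$ for the a priori energy bound, a dichotomy on the competitor's energy combined with H\"older's inequality to absorb the $q$-growth remainder into a gauge $h(r)\sim r^{1-q/2}$, and then an application of Theorem~\ref{thm_af}. The only nitpick is that invoking \eqref{eq_Gupperbound} in the dichotomy is formally circular (it is stated for quasiminimizers, which is what you are in the middle of establishing), but the needed bound on $\int_B \CC{e(u)}\dd{x} + M^{-1}\HH^{N-1}(J_u\cap B)$ follows directly from the same comparison you used in the first step, exactly as in the paper's \eqref{eq_gupper}.
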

\BBB A particular case of interest for (\ref{eq_gxi}) are perturbations of type 
\begin{align}\label{eq_gxi2}
g(\abs{\xi}^2) = M(1 + \abs{\xi}^q),
\end{align} for some constant $M \geq 1$ and exponent $q \in (0,2)$, cf.\ also \cite{new-ref2}. \EEE {This is the case for example for} densities of the form $f(\xi) = \abs{\xi - S}^2$, where $S:\Omega \to \R^{N\times N}_{\rm sym}$ is a field of bounded symmetric matrices, arising in the dual formulation of the biharmonic optimal compliance problem \cite{LemPakzad}. 
    The general surface term can take into account material anisotropy, (non-degenerate) cohesive material  properties, \EEE or more complex behaviors across cracks,  such  \EEE as in the thermal insulation problem studied in \cite[Section 4]{BL} (in the scalar case). 
    {Note that the second assumption on $\psi$ is needed because the tuple $(x,u^+,u^-,\nu_u)$ is only determined up to a change of sign of $\nu_u$ and an interchange of $u^+$ and $u^-$.}

 \begin{remark}[Existence]\label{rmk:...manuel}   
 {\normalfont
 
     Suppose that $\Omega$ is a bounded Lipschitz domain and let $\Omega' \supset \Omega$ be a bounded domain such that $\partial_D\Omega = \partial \Omega \cap \Omega'$ is relatively open. We consider   $u_0 \in W^{1,2}(\Omega';\R^N)$ representing Dirichlet boundary values on $\partial_D\Omega$.  We suppose that, besides the assumptions in   Theorem \ref{cor:aniso}, we have that {$\psi = \psi(x,\nu)$} is continuous in $x$ and   independent of $u^+$ and $u^-$, and that  $f \colon \Omega \times \R^{N \times N}_{\mathrm{sym}} \to \R_+$ is  a Carathéodory function which  is symmetric quasiconvex for a.e.\ $x$, i.e., 
 $${f(x,\xi)} \le \frac{1}{\mathcal{L}^N(A)} \int_A f(x,    \xi  + e(\varphi)(y)   ) \, {\rm d} y   \quad \text{ for all $x \in \R^{N \times N}_{\rm sym}$ and   $\varphi \in  W^{1,\infty}_0 \EEE (A;\R^N)$}$$
 for every bounded open set $A \subset \R^N$. In this setting, existence of (weak) minimizers $u \in GSBD^2(\Omega)$ of \eqref{eqn:anisoEnergy}  (on $\Omega'$ in place of $\Omega$) \EEE under the Dirichlet condition $u = u_0$ on $\Omega' \setminus \overline{\Omega}$ is guaranteed, see \cite[Proposition~4.4]{CC2020}.
 
The case where $\psi$ additionally depends on the traces $u^+$ and $u^-$ is more delicate. Here, existence can be shown for a relaxed problem   \cite[Proposition 4.5]{CC2020}, \EEE under additional properties for $\psi$ given in  \cite[Theorem 1.2]{CC2020}.  \EEE In particular, an assumption on lower semicontinuity of surface integrals is needed  \cite[$(g_4)$]{CC2020} which is {for instance} satisfied if $\psi$ is symmetric jointly convex, \BBB see    \cite[Theorem 5.1]{perugini}.    \EEE
\BBB In specific situations, the approach of Theorem \ref{cor:aniso} can also be applied to functionals where $\psi$ is not bounded from below. For instance, in the thermal insulation problem \cite{BL}, $\psi$ is given by $(u^+)^2 + (u^-)^2$ but the authors establish that, if $u$ is a minimizer, this density is actually bounded from below along $J_u$ by a universal constant. Therefore, the minimizers of this problem also fall within the class of quasiminimizers.

 }
 \end{remark}

 \begin{remark}[Bulk energy]   
 {\normalfont
    The main limitation to the choice of $f$ is that we rely on regularity estimates for local minimizers of $$v \mapsto \int_{B(0,1)} \CC{e(v)} \dd{x}.$$ 
    In particular, Theorem \ref{cor:aniso} holds provided that \eqref{eq_gxi} is replaced by
    \begin{equation*}
        \BBB \abs{f(x,\xi) - f_0(\xi)} \leq g(\abs{\xi}^p) \quad \text{ for all }x\in \Omega \text{ and } \xi \in \R^{N \times N}_{\mathrm{sym}},
    \end{equation*}
    where $f_0$ is a convex $p$-homogeneous function, with $$\frac{1}{C_0}|\xi|^p \leq f_0(\xi) \leq C_0|\xi|^p \quad \text{ for all } \xi \in \R^{N \times N}_{\mathrm{sym}}$$ for some $C_0>0$, such that any local minimizer $u$ of the energy $\int_{B(0,1)} f_0(e(v)) \dd{x}$ satisfies the decay estimate
    $$\int_{B(0,r)} f_0(e(u)) \dd{x} \leq C r^{N-1+\alpha} \int_{B(0,1)} f_0(e(u)) \dd{x} \quad \text{ for all }r\in (0,1)$$
    for some constants $C>0$ and $\alpha>0$ depending only on $f_0$ and the dimension.
    \EEE
    In particular, by \cite[Proposition 3.4 and  Corollary \EEE 4.3]{CFI0}, in dimensions $N=2$ and $3$, our results apply when $f_0(\xi) = {(\C\xi:\xi)^{p/2}}$ for $p \in (1,+\infty)$.  (In fact, as in \cite{CCI}, the approximation result needed in the proof of Theorem~\ref{thm_af}, see Section~\ref{sec:Approx}, carries over to general $p$, which we do not perform in this paper for notational convenience.  Accordingly, in the decay estimate Proposition \ref{prop_decay} one needs to replace $\tau^{N}$ by $\tau^{N-1+\alpha}$ which is enough to prove Theorem \ref{thm_af}.) \EEE
    Similarly, our results {{adapt} to} the fully-elliptic setting when {$f(x,e(u))$ is replaced by $f(x,\nabla u)$}. {In this case,} Theorem \ref{cor:aniso} gives existence of strong minimizers to the relaxation of the (nonlinearly) frame indifferent energy
    $$ \int_{\Omega} {\rm dist}^2(\nabla u, {\rm SO}(N)) \dd{x} + \mathcal{H}^{N-1}(J_u),$$
    defined for $u$ in $GSBV^2(\Omega;\R^{N})$, or for energies with more general surfaces densities as considered in~\eqref{eqn:anisoEnergy}. 
   }
\end{remark}

\begin{proof}[Proof of Theorem \ref{cor:aniso}]
 We prove that a local minimizer $u \in GSBD^2  (\Omega) \EEE $ of the functional \eqref{eqn:anisoEnergy} is a $GSBD$ $M$-quasiminimizer for a suitable gauge $h$ and $M \ge 1$ \EEE in the sense of Definition \ref{defi_quasi}. With this, Theorem~\ref{thm_af}  concludes \EEE  the result. \BBB First, let us notice that, since $g$ is nondecreasing,  for all $t > 0$ we have
    \begin{equation*}
        \frac{g(t)}{t} \leq \int_t^{+\infty} \frac{g(s)}{s^2} \dd{s},
    \end{equation*}
    which converges to zero as $t \to +\infty$.
It follows that there exists $M_1 \geq 1$ (depending on $\mathbb{C}$ and $g$) such that for all $\xi \in \R^{N \times N}_{\mathrm{sym}}$ we have $g(\abs{\xi}^2) \leq (\CC{\xi})/2 + M_1$, and thus by \eqref{eq_gxi}
    \begin{equation}\label{eq_gxi3}
        \CC{\xi} \leq 2f(x,\xi) + 2M_1 \quad \text{ for all }x\in \Omega \text{ and } \xi \in \R^{N \times N}_{\mathrm{sym}}.
    \end{equation} \EEE
    We fix an open ball $B(x_0,r) \subset \Omega$.
    \BBB  Setting $h(r) = +\infty$ if $r>1$, we can assume without restriction that  $r \leq 1$. \EEE
    By comparing $u$ with  the competitor   $u   \chi_{\Omega\setminus B(x_0,\rho)} \EEE $ for $0 < \rho < r$, using ${f(x,0)} \leq \BBB g(0)\EEE$ and $M^{-1} \leq \psi \leq M$,   and sending $\rho \to r$, the local minimality of $u$ yields \EEE
    \begin{equation*}
        \int_{B(x_0,r)} f(x,e(u)) \dd{x} + M^{-1} \HH^{N-1}(J_u \cap B(x_0,r)) \leq \BBB g(0) \EEE \, \omega_N r^N + M \sigma_{N-1} r^{N-1},
    \end{equation*}
    where $\omega_N$ is the volume of the unit ball and $\sigma_{N-1}$ is the surface area of the unit sphere.     Then,  \BBB as $r \le 1$, \EEE from \eqref{eq_gxi3}  it follows \EEE that
    \begin{equation}\label{eq_gupper}
        \int_{B(x_0,r)} \CC{e(u)} \dd{x} + M^{-1} \HH^{N-1}(J_u \cap B(x_0,r)) \leq  C_0 r^{N-1}
    \end{equation}
    for some constant $C_0 \geq 1$ which depends on $N$, $M$,  $M_1$, \BBB and $g(0)$. \EEE
    Now, we let $v$ be a  {$GSBD$} \EEE competitor of $u$ in $B(x_0,r)$  in the sense of Definition \ref{def:com},1). \EEE  By minimality of $u$, we have
    \begin{align}
        \int_{B(x_0,r)} &  f(x,e(u)) \dd{x}  + \int_{J_u \cap B(x_0,r)} \psi(x,u^+,u^-,\nu_u) \dd{\HH^{N-1}}\nonumber \\ 
        &\leq \int_{B(x_0,r)} f(x,e(v)) \dd{x} + \int_{J_v \cap B(x_0,r)} \psi(x,v^+,v^-,\nu_v) \dd{\HH^{N-1}}. \label{eqn:starmin}
    \end{align}
 We  now \EEE   show that this implies the quasiminimality condition \eqref{eq_GSBD_quasi}     for some gauge $h$ to be  specified.    In fact, \EEE if
    \begin{equation*}
        \int_{B(x_0,r)} \CC{e(v)} \dd{x} > C_0 r^{N-1},
    \end{equation*}
    inequality \eqref{eq_GSBD_quasi} holds directly for $h \equiv 0$ \EEE   {by \eqref{eq_gupper}.}
    Otherwise, \BBB by \eqref{eq_gxi} \EEE we have  
    \begin{equation*}
        \int_{B(x_0,r)} f(x,e(v)) \dd{x} \leq \int_{B(x_0,r)} \CC{e(v)} \dd{x} + \BBB \int_{B(x_0,r)} g(\abs{e(v)}^2) \dd{x}. \EEE
    \end{equation*} \BBB
 Our goal is to show that
    \begin{equation}\label{eq_gcontrol}
        \int_{B(x_0,r)} g(\abs{e(v)}^2) \dd{x} \leq h(r) r^{N-1},
        \quad \text{where} \quad
        h(r) := C_1 C_0 \int_{1/r}^{+\infty} \frac{g(s)}{s^2} \dd{s},
        \end{equation}
    for some $C_1 \geq 1$ which depends only on $N$.
    For this purpose, we estimate the integral separately on $B(x_0,r) \cap \set{\abs{e(v)}^2 \leq 1/r}$ and $B(x_0,r) \cap \set{\abs{e(v)}^2 > 1/r}$. For the first part, using the fact that $g$ is nondecreasing, we estimate
    \begin{equation*}
        \int_{B(x_0,r) \cap \set{\abs{e(v)}^2 \leq 1/r}} g(\abs{e(v)}^2) \dd{x} \leq \omega_N r^N g(1/r) \leq \omega_N  r^{N-1}   \int_{1/r}^{+\infty} \frac{g(s)}{s^2} \dd{s}  .
    \end{equation*}
    In order to deal with the second part, for $t > 0$ we define 
     $$g^{-1}(t) := \mathrm{inf} \set{s \in (0,+\infty) | g(s) \geq t}.$$
 \BBB (Clearly, if $g$ is strictly increasing, $g^{-1}$ is the inverse.) \BBB If $g(s) \geq t$, we have $s \geq g^{-1}(t)$, and  if $s > g^{-1}(t)$, then $g(s) \geq t$. Now, we use the layer cake representation and the assumption $\int_{B(x_0,r)} \CC{e(v)} \dd{x} \leq C_0 r^{N-1}$ to bound
    \begin{align*} 
        \int_{B(x_0,r) \cap \lbrace \abs{e(v)}^2 > 1/r \rbrace} g(\abs{e(v)}^2) \dd{x} &= \int_{0}^{+\infty} \big|\set{x \in B(x_0,r) \colon \,  \abs{e(v)}^2 \geq \max(1/r, g^{-1}(t))} \big| \dd{t}\\
                                                                                   &\leq \int_{0}^{+\infty} \frac{1}{\max(1/r,g^{-1}(t))} \left(\int_{B(x_0,r)} \abs{e(v)}^2 \dd{x}\right) \dd{t}\\
                                                                                   &\leq C C_0 r^{N-1} \int_{0}^{+\infty} \min(r,1/g^{-1}(t)) \dd{t}.
    \end{align*} 
    Using the layer cake representation again and Fubini's theorem, we find
    \begin{align*}
        \int_{0}^{\infty} \min(r,1/g^{-1}(t)) \dd{t} &= \int_0^{\infty} \int_0^{r} \chi_{\set{0 < s < 1/g^{-1}(t)}} \dd{s} \dd{t}
                                                             \leq \int_0^{\infty} \int_0^{r} \chi_{\lbrace s \colon  g(1/s) \geq t\rbrace } \dd{s} \dd{t}\\
                                                             &\leq \int_0^{r} g(1/s) \dd{s} = \int_{1/r}^{+\infty} \frac{g(s)}{s^2} \dd{s}.
    \end{align*}
This proves (\ref{eq_gcontrol}).
    \EEE 
Similarly, by \eqref{eq_gupper},  \BBB   \eqref{eq_gxi},  and \eqref{eq_gcontrol} \EEE we have
    \begin{align*}
        \int_{B(x_0,r)} f(x,e(u)) \dd{x} &\geq \int_{B(x_0,r)} \CC{e(u)} \dd{x} -  \BBB \int_{B(x_0,r)} g(\abs{e(u)}^2) \dd{x}\\
                                         &\geq \int_{B(x_0,r)} \CC{e(u)} \dd{x} - h(r) r^{N-1}.
\end{align*}
\EEE
We may plug the above estimates into (\ref{eqn:starmin}) and use the bound $M^{-1}\leq \psi \leq M$ to see that   $u$ is a $M$-quasiminimizer in the sense of \eqref{eq_GSBD_quasi} with a gauge $h$ defined \BBB for all $r \leq 1$ as in \eqref{eq_gcontrol}. \BBB As $s^{-2} g(s)$ is integrable on $[1,+\infty)$, we have $\lim_{r \to 0} h(r) = 0$ and therefore Theorem \ref{thm_af} applies. \EEE
   \end{proof}

\section{Density lower bound for quasiminimizers}\label{sec:density lower bound}

In this section   we \EEE prove Theorem \ref{thm_af}.
{The heart of the proof is a decay estimate} for the localized Griffith energy in a region only having a small crack. With this decay estimate at hand, we may directly conclude Theorem \ref{thm_af}. Our proof of the decay estimate requires a study of limits of quasiminimizers with vanishing crack, analogous to \cite{CFI,CCI}. For the case of $GSBD$ quasiminimizers, we will rely on an approximation result proven in \cite{CCI}. For topological quasiminimizers  instead, \EEE we will need to ensure that this approximation, found using minimality, respects both the topological constraints for competitors and that the crack is closed. Given the similarity between the two cases, we will focus our attention on the more involved case of topological quasiminimizers and highlight places in the proof where $K$ cannot  be replaced simply by $J_u$.  For convenience, in place of integrals of the form $\int_{B(x_0,r) \setminus K}$, we always write  $\int_{B(x_0,r)}$ which clearly does not affect the integral as $K$ is a Lebesgue-negligible set. \EEE

\subsection{Decay property of the Griffith energy}

We adopt the approach used in \cite{CFI} and \cite{CCI}, which was motivated by the classical result of {\sc De Giorgi et al.} \cite{DCL} (also presented in \cite{AFP}), but also introduced new compactness results for the vectorial Griffith setting.
Given an open ball $B(x_0,r)$ and a pair $(u,K)$, we let $G(x_0,r)$ denote the Griffith energy of $(u,K)$ in $B(x_0,r)$,  namely \EEE
\begin{equation}\label{eqn:GriffithEnergyLoc}
    G(x_0,r) := \int_{B(x_0,r)} \CC{e(u)} \dd{x} + \HH^{N-1}\big(K \cap B(x_0,r)\big).
\end{equation}
We show  the \EEE following decay estimate   for the Griffith energy of a quasiminimizer.
\begin{proposition}[Decay estimate]\label{prop_decay}
    For each $M \geq 1$ and $\tau \in (0,1)$, there exists $\varepsilon \in (0,1)$ (depending on $N$, $\C$, $M$, $\tau$) such that the following holds.
    If $u$ is a topological (or $GSBD$) $M$-quasiminimizer with gauge $h$ in a ball {$B(x_0,r) \subset \Omega$} and if
    \begin{equation*}
         \frac{\HH^{N-1}(  K \EEE \cap B(x_0,r))}{r^{N-1}} \leq \varepsilon \quad \text{and} \quad h(r) \leq \varepsilon  \frac{G(x_0,r)}{r^{N-1}},
    \end{equation*}
    then we have
    \begin{equation*}
        G(x_0,\tau r) \leq C \tau^{N} G(x_0,r),
    \end{equation*}
    for some constant $C \geq 1$ which depends  only \EEE on $N$ and $\C$.
\end{proposition}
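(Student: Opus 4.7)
The proof goes by a contradiction-compactness argument, following the scheme of \cite{DCL} for Mumford--Shah adapted to the vectorial/Griffith setting as in \cite{CFI, CCI}. I will describe the argument for topological quasiminimizers; the $GSBD$ case is analogous (and slightly simpler, since the approximation from \cite{CCI} may be invoked directly).

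\emph{Step 1 (blow-up).} Assume the conclusion fails for some $\tau \in (0,1)$ with a constant $C$ to be fixed later. Then one finds a sequence of topological $M$-quasiminimizers $(u_k, K_k)$ in balls $B(x_k, r_k) \subset \Omega$ with gauges $h_k$ such that
\begin{equation*}
    \eps_k := \frac{\HH^{N-1}(K_k \cap B(x_k, r_k))}{r_k^{N-1}} \to 0, \qquad \frac{h_k(r_k)\, r_k^{N-1}}{G_k(x_k, r_k)} \to 0,
\end{equation*}
yet $G_k(x_k, \tau r_k) > C \tau^N G_k(x_k, r_k)$. Translating and rescaling to $B(0,1)$ via $y = (x-x_k)/r_k$, $v_k(y) := \alpha_k u_k(x_k + r_k y)$ with $\alpha_k$ chosen so the rescaled bulk energy is bounded by $1$ and subtracting a suitable rigid motion, I obtain $(v_k, L_k)$ of topological $M$-quasiminimizers on $B(0,1)$ with $\HH^{N-1}(L_k \cap B(0,1)) \le \eps_k \to 0$, vanishing rescaled gauge, but rescaled $G_k(0, \tau) > C \tau^N G_k(0, 1)$.

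\emph{Step 2 (the limit solves linear elasticity).} Fix any $\rho \in (\tau, 1)$. Since $\HH^{N-1}(L_k \cap B(0,\rho)) \to 0$, the approximation result (Lemma \ref{lem_compactness}, specialized to admissible pairs in Section \ref{sec:Approx}) yields $w_k \in W^{1,2}(B(0,\rho); \R^N)$ with $|\{v_k \ne w_k\}| \to 0$ and $e(w_k)$ close to $e(v_k)$ in $L^2$ on the agreement set. By the bulk bound and Korn, $w_k \rightharpoonup w$ in $W^{1,2}$ up to subsequence, with $e(w_k) \rightharpoonup e(w)$ in $L^2$. Testing the quasiminimality of $(v_k, L_k)$ against the harmonic replacement of $w_k$ inside $B(0, \rho')$ for some $\rho' < \rho$ (a competitor with empty crack in $B(0, \rho')$, matching $v_k$ outside) and passing to the limit shows that $w$ minimizes $z \mapsto \int_{B(0, \rho')} \C e(z):e(z)$ among compactly-supported perturbations. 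Hence $w$ solves $\operatorname{div}(\C e(w)) = 0$ and enjoys the classical Campanato-type decay
\begin{equation*}
    \int_{B(0,\tau)} \C e(w):e(w) \dd y \leq C_0 \tau^N \int_{B(0,\rho')} \C e(w):e(w) \dd y,
\end{equation*}
for a universal constant $C_0 = C_0(N,\C)$.

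\emph{Step 3 (transferring the decay).} Using once more the quasiminimality of $(v_k, L_k)$ with a competitor that agrees with $v_k$ outside $B(0, \rho'')$ and equals $w$ inside $B(0, \tau)$, for an intermediate radius $\tau < \rho'' < \rho'$ selected by a mean-value (layer-cake) argument so that the transition annulus absorbs only $o(1)$ additional surface energy and the resulting pair has closed crack with the correct topology, one obtains
\begin{equation*}
    G_{v_k, L_k}(0,\tau) \leq C_0 \tau^N (1 + o(1)) + o(1).
\end{equation*}
Choosing $C > 2 C_0$ contradicts the assumption of Step 1 for $k$ large, and rescaling back to the original ball gives the claimed decay.

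\emph{Main obstacle.} The most delicate points are the constructions of the competitors in Steps 2 and 3: in the topological setting they must be admissible pairs with closed cracks respecting the separation condition (\ref{eq_topological}), which is precisely why Section \ref{sec:Approx} must refine the standard $GSBD$ approximation from \cite{CCI}. A further technical subtlety is controlling the additional surface introduced in the gluing annulus via a Fubini/mean-value argument on the radius, together with handling the weak convergence of $e(w_k)$ when comparing elastic energies (one argues via the strong convergence on the agreement set and a Dirichlet-type competitor that lifts weak convergence to a sharp energy bound). Once these admissible competitors are available, the proof reduces to standard elliptic regularity for linear elasticity.
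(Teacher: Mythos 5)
Your overall strategy is the paper's: contradiction, rescaling to $B(0,1)$ with normalization by the total Griffith energy, the approximation result of Lemma \ref{lem_compactness}, cut-off gluing of the limit into the sequence to force energy convergence and minimality of the limit, then the elliptic decay for $\operatorname{div}(\C e(w))=0$ (in the paper this middle part is isolated as Proposition \ref{prop_compactness}). For the $GSBD$ case your sketch is essentially complete.

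For the topological case, however, there is a genuine gap exactly at the point you flag but then misplace. The competitor obtained by wiping the crack inside an interior ball (your ``empty crack in $B(0,\rho')$'' in Step 2, and the glued competitor in Step 3) is in general \emph{not} a topological competitor: deleting $K_k$ inside the ball can reconnect points outside the ball that $K_k$ separated, and no choice of gluing radius by a mean-value argument repairs this by itself, since the violation concerns separation in the complement of the ball, not the amount of surface in the transition annulus. The repair is also not performed in Section \ref{sec:Approx}, which only guarantees that the approximating pair $(\tilde u_n,\tilde K_n)$ is admissible (closed crack, Sobolev off the crack); the separation condition \eqref{eq_topological} is restored by a separate device in the proof of Proposition \ref{prop_compactness}: one considers the connected components $(V_i)_i$ of $B(0,t')\setminus K_n$, shows via the relative isoperimetric inequality for the induced Caccioppoli partition that the union $F_n$ of all components except the largest satisfies $\abs{F_n}\leq C\,\HH^{N-1}(K_n)^{N/(N-1)}$, and then uses the coarea formula to pick a radius $\rho_n$ for which $\HH^{N-1}(F_n\cap\partial B(0,\rho_n))$ is of the same small order; adding this spherical slice to the competitor's crack restores \eqref{eq_topological}, and its weighted area $c_n\HH^{N-1}(F_n\cap\partial B(0,\rho_n))$ vanishes because $c_n\HH^{N-1}(K_n)$ is bounded while $\HH^{N-1}(K_n)\to0$. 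Without this ingredient, the quasiminimality inequality cannot be tested against your competitors, so neither the minimality of the limit in Step 2 nor the energy transfer in Step 3 is justified for topological quasiminimizers; with it, your argument coincides with the paper's.
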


The essence of the above proposition is that, if the relative amount of crack in a ball is small, then the localized Griffith energy inherits the decay given by the energy with no crack. In particular, the decay scales like that for minimizers of the energy $\int \CC{e(u)}\dd{x}$, which solve a standard linear elliptic PDE.   We first prove Theorem \ref{thm_af} in Subsection \ref{sec: main proof} and afterwards we give the proof of the decay estimate in Subsection \ref{sec: decay}. \EEE

\subsection{Proof of Theorem \ref{thm_af}}\label{sec: main proof}
Below we prove (2) of Theorem \ref{thm_af}, and comment on the slight change for $GSBD$ quasiminimizers in the remark following the proof.

\begin{proof}[Proof of Theorem \ref{thm_af}]
We proceed in three steps. In Step 1, we show that  one can reduce the problem  to considering \EEE points in the crack  $K$ \EEE with their $(N-1)$-dimensional density uniformly bounded away from $0$. In Step 2, we find an energy lower bound for the localized Griffith energy. For this, we argue by contradiction and use the decay estimate in Proposition \ref{prop_decay} to show that, if the Griffith energy is small to start with, the Griffith energy stays small at all smaller scales.  This in turn yields  small $(N-1)$-dimensional density of  the surface energy, contradicting  the density bound found in Step 1. \EEE    Finally, in Step 3, we conclude  using a similar argument by contradiction. \EEE

\textit{Step 1 (Uniform lower bound on $(N-1)$-dimensional density).}
We introduce the set
\begin{equation*}
    K^* := \Set{x \in K  \colon \EEE \, \limsup_{r \to 0} \frac{\HH^{N-1}\big(K \cap B(x,r)\big)}{\omega_{N-1} r^{N-1}} \geq 2^{-(N-1)}},
\end{equation*}
where $\omega_{N-1}$ is the measure of the  $(N-1)$-dimensional  unit \EEE disk. As $K$ is $\HH^{N-1}$-locally finite (but not necessarily rectifiable) by \cite[(2.43)]{AFP} for $\HH^{N-1}$-a.e. $x \in K$ it holds that
\begin{equation*}
    \limsup_{r \to 0} \frac{\HH^{N-1}\big(K \cap B(x,r)\big)}{\omega_{N-1} r^{N-1}} \geq 2^{-(N-1)},
\end{equation*}
and in particular, $\HH^{N-1}(K \setminus K^*) = 0$.
We further claim that
\begin{equation}\label{eqn:subsetrel}
    \overline{K^*} \cap \Omega = K.
\end{equation}
The inclusion $\overline{K^*} \cap \Omega \subset K$ is clear as $K^*\subset K$  and $K$ is closed. \EEE To see that $K \subset \overline{K^*}$,   suppose by contradiction that there exists \EEE $x\in K \setminus \overline{K^*}$.

Then there is a radius $r > 0$ such that $K^* \cap B(x,r) = \emptyset$ and, since $K^*$ contains $\mathcal{H}^{N-1}$-a.e.\ point in $K$, we must have
\begin{equation}\label{eqn:densityContra}
\mathcal{H}^{N-1}\big(K\cap B(x,r)\big) = 0.
\end{equation}
\EEE
But as $K$ is coral, this {contradicts} the assumption $x\in K$, {see (\ref{eq_coral})}. This concludes the proof of (\ref{eqn:subsetrel}).  Due to \eqref{eqn:subsetrel} and a density argument, in the next steps we can focus on points in $K^*$. \EEE

\newcommand{\Co}{C_0}
\newcommand{\varepso}{\varepsilon_0}
\newcommand{\tauo}{\tau_0}

\textit{Step 2 (Lower bound on Griffith energy at uniformly positive scale).}
We show that we can choose constants $\varepso \in (0,1)$ and $\Co \geq 1$ (depending only on $N$, $\C$, $M$) such that for all {$x \in K$} and  $r > 0$ such that {$B(x,r) \subset \Omega$} and $h(r) \leq {\varepso^3}$,   we have
    \begin{equation}\label{eq_G}
        G(x,r) \geq \Co^{-1} r^{N-1}.
    \end{equation}
    We fix  a \EEE  parameter $\tauo \in (0,1)$ to be chosen later (depending only on $N$ and $\C$, see right after \eqref{eqn:decayCaseOne}). With this, we fix $\varepso$  with $0< \varepso < \min\lbrace \tauo^{N-1},\omega_{N-1} (\tauo/2)^{N-1}\rbrace$ \EEE  such that statement of Proposition \ref{prop_decay} holds {with respect to $\tauo$.} Finally, we note that  {by  \eqref{eqn:subsetrel}} it is sufficient to prove (\ref{eq_G}) for points {$x \in K^*$}.

    \emph{{Substep} 2.1 (One step).} Let $x \in K^*$. We claim that, whenever {$B(x,r) \subset \Omega$} with
    \begin{equation}\label{eq_G1}
        G(x,r) \leq \varepso r^{N-1} \quad \text{ and } \quad  h(r) \leq \varepso^2 \tauo^{N-1},
    \end{equation}
    then
    \begin{equation}\label{eqn:des_est}
        G(x, \tauo \EEE  r) \leq \varepso (\tauo r)^{N-1}.
    \end{equation}
    Indeed,   condition \eqref{eq_G1} implies $\HH^{N-1}(K \cap B(x,r)) \leq \varepso r^{N-1}$, and then two possibilites can occur.
    If we have $h(r) \leq \varepso r^{-(N-1)} G(x,r)$, Proposition \ref{prop_decay} and (\ref{eq_G1}) show that 
    \begin{equation}\label{eqn:decayCaseOne}
        G(x,\tauo r) \leq C \tauo^N G(x,r) \leq C \varepso \tauo^N r^{N-1}
    \end{equation}
    for some constant $C \geq 1$ that depends on $N$ and $\C$.   Taking $\tauo$ small enough such that $C \tauo \leq 1$, inequality (\ref{eqn:decayCaseOne}) gives the desired estimate (\ref{eqn:des_est}).
    If on the other hand $h(r)  > \EEE \varepso r^{-(N-1)} G(x,r)$, then we trivially have
    \begin{equation*}
        G(x,\tauo r) \leq G(x,r) \leq h(r) \varepso^{-1} r^{N-1},
    \end{equation*}
    and the assumption $h(r) \leq \varepso^2 \tauo^{N-1}$ yields the estimate (\ref{eqn:des_est}).

    \emph{{Substep} 2.2 (Iteration).} We deduce by iteration that, if there exists some $r > 0$ such that {$B(x,r) \subset \Omega$}, $h(r) \leq \varepso^{2} \tauo^{N-1}$, and $G(x,r) \leq \varepso r^{N-1}$, then for all integers $k \geq 0$,
    \begin{equation*}
        G(x, \tauo^k r) \leq \varepso (\tauo^k r)^{N-1}.
    \end{equation*}
 This follows from {Substep} 2.1 and the fact that $h$ is nondecreasing. \EEE     For all $0 < \rho \leq r$, there exists an integer $k \geq 0$ such that $\tauo^{k+1} r \leq \rho \leq \tauo^k r$, and thus
    \begin{equation*}
        G(x,\rho) \leq G(x,\tauo^k r) \leq \varepso \EEE (\rho/\tauo)^{N-1}.
    \end{equation*}
    In turn, this implies that
    \begin{equation*}
        \limsup_{\rho \to 0} \frac{\HH^{N-1}\big(K \cap B(x,\rho)\big)}{\rho^{N-1}} \leq \varepso \tauo^{-(N-1)} < \omega_{N-1}2^{-(N-1)},
    \end{equation*}
   where we used $\varepso < \omega_{N-1} (\tauo/2)^{N-1}$. This \EEE   contradicts the assumption $x \in K^*$.
   We conclude that for all $x \in K^*$ and all $r > 0$ such that {$B(x,r) \subset \Omega$} and {$h(r) \leq \varepso^{3}$ (note $\varepso^{3} \leq \varepso^{2} \tauo^{N-1}$)}, (\ref{eq_G}) holds with $\Co = 1/\varepso$.

\textit{Step 3 (Lower bound on surface energy at uniformly positive scale).}
    We take $\tau \in (0,1)$ as a small constant to be chosen below (depending on $N$, $\C$ and $M$) and let $\varepsilon > 0$ be the constant associated with $\tau$ coming from Proposition \ref{prop_decay} (which depends on $N$, $\C$, $M$).

    If there exists $x \in K$ and $r > 0$ with {$B(x,r) \subset \Omega$} such that $h(r) \leq \min\lbrace \Co^{-1} \varepsilon,  \varepso^3\rbrace \EEE $ and
    \begin{equation}\label{eqn:finalContra}
        \HH^{N-1}\big(K \cap B(x,r)\big) \leq \varepsilon r^{N-1},
    \end{equation}
    then by \eqref{eq_G} we have $h(r) \leq \varepsilon r^{-(N-1)} G(x,r)$, and thus Proposition \ref{prop_decay} shows that
    \begin{equation*}
        G(x,\tau r) \leq C \tau^N G(x,r).
    \end{equation*}
    By \eqref{eq_Gupperbound}, we have $G(x,r) \leq C r^{N-1}$ (for a constant $C>0$ depending on $M$), so
    \begin{equation*}
        G(x,\tau r) \leq C \tau^N r^{N-1}
    \end{equation*}
     for $C>0$ depending on $M$,  $N$, and $\C$. \EEE     If $\tau$ is taken small enough (depending on $N$, $\C$ and $M$) this contradicts \eqref{eq_G}. Consequently, \eqref{eqn:finalContra} cannot hold for any $r>0$ with  $B(x,r)\subset \subset \Omega $ and \EEE $h(r) \leq \min\lbrace \Co^{-1} \varepsilon, \varepso^3\rbrace$\EEE. Thus, the statement holds for ${C_{\rm Ahlf}} := \eps^{-1}$ and $\eps_{\rm Ahlf}:=\min\lbrace \Co^{-1} \varepsilon,  \varepso^3\rbrace \EEE $.
\end{proof}

\begin{remark}[The $GSBD$ case]
{\normalfont
    Proceeding as in Step 1 of the above proof for $GSBD$ quasiminimizers, with $K$ replaced by $J_u$ (or occasionally $\overline{J_u}$), one must show $  \overline{J_u^*} \cap \Omega = \overline{J_u} \cap \Omega$, the analogue of \eqref{eqn:subsetrel}. Herein, (\ref{eqn:densityContra}) becomes 
\begin{equation}\label{eqn:densityContra2}
    \HH^{N-1}\big(J_u \cap B(x, r)\big) \EEE = 0 
\end{equation}
for some $ x \EEE \in J_u \setminus \overline{J_u^*}$ {and $r > 0$ small enough such that $B(x,r)\subset \Omega$ and $h(r) < \infty$}. To see that $ x \EEE$ cannot belong to $J_u$, note that by the definition of $GSBD$ via slices (see \cite{DalMaso:13}) and (\ref{eqn:densityContra2}) the distributional symmetric derivative is absolutely continuous, and by Korn's inequality, one finds that $u \in W^{1,2}(B( x, \EEE r);\R^N)$.
But this contradicts the assumption $ x \EEE \in J_u$ thanks to Proposition \ref{prop_campanato} in    Appendix  \ref{sec:appendix}  (note that \eqref{eq_campanato} holds by \eqref{eq_Gupperbound}). \EEE Steps 2 and 3 remain unchanged, replacing $K$ by $J_u$.  Provided that $\limsup_{r \to 0} h(r) < \varepsilon_{\rm Ahlf}$, we eventually get \eqref{eqn:essClosed} as a standard consequence of the density lower bound along $\overline{J_u}$, see \cite[(2.42)]{AFP}. \EEE
} 
\end{remark}

\subsection{Proof of the decay estimate}\label{sec: decay}
The proof of the decay estimate, i.e., Proposition \ref{prop_decay}, is achieved by a contradiction and compactness argument.
For this, we will need to understand the limit behavior of quasiminimizing sequences with asymptotically vanishing crack.

 We start with explaining the proof idea. Working on the unit ball and supposing that the conclusion of Proposition \ref{prop_decay} fails, we   have a sequence of pairs $(u_n,K_n)$  with vanishing crack, in the sense that $\lim_{n \to   \infty} \HH^{N-1}(K_n) =0$, and such that 
\begin{equation}\label{eqn:contraHyp}
G_n(0,\tau)>C\tau^NG_n(0,1) ,
\end{equation}
where $\tau \in (0,1)$ is fixed and $G_n$ is the Griffith energy of $(u_n,K_n)$ as in (\ref{eqn:GriffithEnergyLoc}) and the constant $C>0$ will be chosen  appropriately. \EEE   To derive a contradiction, we will show that $u_n$ converges to $u \in W^{1,2}(B(0,1);\mathbb{R}^N)$ minimizing
$\int_{B(0,1)} \CC{e(u)}\dd{x}$ subject to its own boundary conditions. As $u$ solves a  linear \EEE elliptic PDE, we have in particular that
\begin{equation}\label{eqn:contraDecay}
\int_{B(0,\tau)} \CC{e(u)}\dd{x} \leq C_0 \tau^N \int_{B(0,1)} \CC{e(u)}\dd{x}
\end{equation} 
for a universal constant $C_0>0.$ We will pass to the limit in (\ref{eqn:contraHyp}) to find a contradiction with (\ref{eqn:contraDecay}). For a contradiction to arise, we must know that the energies converge, and in particular that
\begin{equation}\label{eqn:limEn}
\lim_{n\to \infty} G_n(0,\tau) =  \int_{B(0,\tau)} \CC{e(u)}\dd{x}.
\end{equation} Showing that the left-hand side of \eqref{eqn:limEn} exceeds the limit's  elastic \EEE energy follows from lower semicontinuity associated with weak convergence, but showing  the reverse inequality \EEE  will require a careful choice of competitors in the quasiminimality condition for $(u_n,K_n)$, see (\ref{eq_david_quasi}). The main technical tool to accomplish this is a more regular approximation of $u_n$ given by $\tilde{u}_n$ such that $\tilde{u}_n - a_n \to u$ in $L^2_{\mathrm{loc}}$ for a sequence of rigid motions $a_n$. We remark that  in general \EEE it is not possible to show that {$u_n - a_n$} converges in $L^2$ to the limit, as small inclusions in the domain may be cut out by the crack, and subsequently travel off to infinity.

We use the approximation technique introduced by {\sc Chambolle et al.} in \cite{CCI} for $GSBD$ functions (see also \cite{CFI}). We state the result for pairs $(u,K)$, and note that the analogous statement for $GSBD$ functions may be found by replacing $K_n$ by $J_{u_n}$. We emphasize that in the $GSBD$ setting,  except for \EEE the statement being on  a \EEE  ball (and not a cube), it is as in \cite[Corollary 4.1]{CCI}. We defer the discussion of the proof of this result to Section \ref{sec:Approx}.

\begin{lemma}\label{lem_compactness}
    Let $(u_n,K_n)$ be a sequence of pairs on $B(0,1)$ such that
    \begin{equation*}
        \sup_{ n \in \N} \int_{B(0,1) } \abs{e(u_n)}^2 \dd{x} < \infty\quad \text{ and } \quad  \HH^{N-1}(K_n) \to 0.
    \end{equation*}
    Let $t \in (0,1]$. \EEE   After extracting a subsequence, there exist a sequence of pairs $(\tilde{u}_n,\tilde K_n)$ {in $B(0,1)$}, a sequence of rigid motions $a_n$, and a function $u \in W^{1,2}(B(0, t \EEE );\R^N)$ such that 
    \begin{enumerate}
    \item $\lbrace    \tilde{u}_n \neq u_n\rbrace \subset \subset B(0, t \EEE  )$ and $K_n\triangle \tilde K_n\subset\subset B(0,  t \EEE  )$;
        \item $\tilde{u}_n - a_n \to u$ in $L^2_{\mathrm{loc}}(B(0, t \EEE  );\R^N)$ and $u_n - a_n \to u$ locally in measure in $B(0, t \EEE  )$;
        \item for all  $r < s < t$, \EEE we have
            \begin{equation*}
                \HH^{N-1}\big(\tilde K_n \cap  A_{s,t} \EEE \big) \leq (1 + o(1)) \HH^{N-1}\big(K_n \cap  A_{s,t} \EEE\big)
            \end{equation*}
            and
            \begin{equation*}
                \int_{ A_{s,t}} \CC{e(\tilde{u}_n)} \dd{x} \leq (1 + o(1)) \int_{ A_{r,t}} \CC{e(u_n)} \dd{x},
            \end{equation*}
            where  $A_{s,t} := B(0,t) \setminus B(0,s)$; \EEE
        \item  for all $s \in (0,t]$ we have \EEE
            \begin{equation*}
                \int_{B(0,s)} \CC{e(u)} \dd{x} \leq \liminf_{n \to \infty} \int_{B(0,s)} \CC{e(u_n)} \dd{x}.
            \end{equation*}
    \end{enumerate}
\end{lemma}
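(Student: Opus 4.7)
The plan is to adapt the piecewise Korn--Poincar\'e approximation developed in \cite{CCI} (specifically the result underlying their Corollary~4.1) to the setting of pairs. The starting point is a piecewise rigidity decomposition: for each $n$, one obtains a Caccioppoli partition $\{\Omega_n^j\}_{j}$ of $B(0,1)$ and a family of rigid motions $\{a_n^j\}_j$ such that
\begin{equation*}
    \sum_j \mathcal{H}^{N-1}\big(\partial^*\Omega_n^j \cap B(0,1)\big) \leq C\,\mathcal{H}^{N-1}(K_n)
\end{equation*}
and $u_n - \sum_j a_n^j \chi_{\Omega_n^j}$ coincides, outside a set $\omega_n$ of vanishing Lebesgue measure, with a $W^{1,2}$ function whose $L^2$-norm is controlled by $\|e(u_n)\|_{L^2}$. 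Since $\mathcal{H}^{N-1}(K_n)\to 0$ forces a relative isoperimetric bound on the small components, after reindexing there is a dominant component $\Omega_n^0$ with $|B(0,t)\setminus\Omega_n^0|\to 0$, while the remaining components together have vanishing Lebesgue measure. Set $a_n := a_n^0$.

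First I would localize the modification inside $B(0,t)$. Using a Fubini/coarea argument on the radius variable, I would select $\rho \in (s,t)$ (with $r<s<t$ as in item~(3)) such that simultaneously $K_n\cap \partial B(0,\rho)$ has small $\HH^{N-2}$-content and the trace of $e(u_n)$ on $\partial B(0,\rho)$ is controlled; this is possible since the annular integrals of the bulk and surface energies are finite. Define $\tilde u_n$ to equal $u_n$ on $B(0,1)\setminus B(0,\rho)$ and, on $B(0,\rho)$, to equal $a_n$ on $\bigcup_{j\neq 0}\Omega_n^j\cup \omega_n$ while coinciding (after a standard mollification/smoothing near the partition boundaries) with the $W^{1,2}$-regular part on $\Omega_n^0\cap B(0,\rho)$. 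Then set $\tilde K_n := (K_n\setminus B(0,\rho))\cup \overline{J_{\tilde u_n}\cap B(0,\rho)}$, which is closed and contains $J_{\tilde u_n}$. This gives~(1) by construction and the annular inequalities in~(3) because the modification only introduces surface contribution on $\partial^*\Omega_n^0\cap B(0,\rho)$, of measure $o(1)$, and adds bulk only on the small components, whose contribution is absorbed in the $(1+o(1))$ factor by the choice of $\rho$.

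For~(2), observe that $\tilde u_n - a_n$ is in $W^{1,2}_{\mathrm{loc}}(B(0,t);\R^N)$ with $\|e(\tilde u_n)\|_{L^2(B(0,t))}$ bounded uniformly in $n$ by~(3). Korn's second inequality on any relatively compact subdomain of $B(0,t)$ bounds $\|\tilde u_n - a_n\|_{W^{1,2}}$ in terms of $\|e(\tilde u_n)\|_{L^2}$ modulo a further rigid motion, which can be absorbed into $a_n$. Passing to a subsequence, $\tilde u_n - a_n \weakly u$ weakly in $W^{1,2}_{\mathrm{loc}}(B(0,t);\R^N)$ and hence strongly in $L^2_{\mathrm{loc}}$ by Rellich--Kondrachov. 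Since $\tilde u_n = u_n$ except on a set of vanishing Lebesgue measure (the small components together with $\omega_n$ inside $B(0,\rho)$), the convergence $u_n - a_n \to u$ locally in measure follows. Property~(4) is then a consequence of the weak convergence $e(\tilde u_n) \weakly e(u)$ in $L^2_{\mathrm{loc}}$, the lower semicontinuity of $\xi\mapsto \C\xi:\xi$, and the fact that $e(\tilde u_n)=e(u_n)$ on $\Omega_n^0$ (which asymptotically exhausts $B(0,s)$), so that $\int_{B(0,s)} \C e(\tilde u_n):e(\tilde u_n)\leq \int_{B(0,s)}\C e(u_n):e(u_n)+o(1)$.

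The main obstacle I anticipate is the simultaneous satisfaction of the two inequalities in~(3): the surface estimate compares $\tilde K_n$ on the annulus $A_{s,t}$ to $K_n$ on the same annulus, while the bulk estimate compares $e(\tilde u_n)$ on $A_{s,t}$ to $e(u_n)$ on the slightly larger annulus $A_{r,t}$. This asymmetry reflects that the smoothing across the partition interfaces must be supported in a slightly thicker annulus, and forces a careful coarea selection of $\rho$ together with a two-scale bookkeeping in the error terms, so that the constants really enter as $1+o(1)$ rather than a multiplicative constant. The second delicate point is verifying that $\tilde K_n$ remains closed after the modification; this is why the closure is taken in the definition of $\tilde K_n$, but one must then check that this closure does not increase $\HH^{N-1}$-measure, which follows from standard facts on $\HH^{N-1}$-rectifiable sets combined with the local finiteness provided by the piecewise Korn decomposition.
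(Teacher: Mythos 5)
Your proposal does not follow the paper's route, and as written it has genuine gaps that I do not see how to close. The first problem is the starting point: the decomposition you attribute to \cite{CCI} --- a Caccioppoli partition $\{\Omega_n^j\}_j$ of $B(0,1)$ with rigid motions $a_n^j$, total interface measure bounded by $C\,\HH^{N-1}(K_n)$, and $u_n-\sum_j a_n^j\chi_{\Omega_n^j}$ globally Sobolev off a small exceptional set --- is not what \cite{CCI} provides. What \cite{CCI} gives (and what the paper uses, through Proposition \ref{prop_approximation}) is a Korn--Poincar\'e estimate with a \emph{single} rigid motion and an exceptional set on each cube of small jump, applied cube by cube on a Whitney covering with good/bad cubes; a global piecewise Korn decomposition of the type you assume is only known in dimension two and cannot simply be quoted here. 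More importantly, even granting such a decomposition, the construction misses the feature that drives item (3): in the paper's approximation the added crack is supported only in the thin shell $B(0,1)\setminus B(0,1-\sqrt{\delta_n})$ and is bounded by $c\sqrt{\delta_n}\,\HH^{N-1}\big(K_n\setminus B(0,1-\sqrt{\delta_n})\big)$, i.e.\ by a vanishing \emph{multiple of the crack measure near the sphere}, which is what yields the multiplicative factor $(1+o(1))$ on annuli. Your added crack consists of the partition interfaces inside $B(0,\rho)$, whose measure is only controlled by $C\,\HH^{N-1}(K_n)$ and which need not be localized: if $K_n$ concentrates in $B(0,s)$ while the interfaces sit in $A_{s,t}$, the inequality $\HH^{N-1}(\tilde K_n\cap A_{s,t})\le (1+o(1))\HH^{N-1}(K_n\cap A_{s,t})$ fails (and an absolute $o(1)$ additive error is useless later, since in Proposition \ref{prop_compactness} the crack is weighted by $c_n\sim \HH^{N-1}(K_n)^{-1}$).

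Two further steps would not go through as stated. For item (2) you apply Korn's inequality and Rellich on subdomains of $B(0,t)$, but your $\tilde u_n$ is \emph{not} Sobolev there: it jumps across the partition interfaces between the rigid motions $a_n$ and the regular part (these jumps are of order $|a_n^j-a_n^0|$ and cannot be mollified away without either unbounded elastic energy or keeping the interfaces in the crack), so $\tilde u_n-a_n\notin W^{1,2}_{\rm loc}(B(0,t);\R^N)$ and the compactness argument collapses; the paper avoids this precisely because Proposition \ref{prop_approximation} produces a function that is $C^\infty$ on the whole inner ball $B(0,1-\sqrt{\delta_n})$ (all cubes there are good when $\delta_n<\eta$). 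Finally, the claim that taking the closure in $\tilde K_n:=(K_n\setminus B(0,\rho))\cup\overline{J_{\tilde u_n}\cap B(0,\rho)}$ does not increase $\HH^{N-1}$ ``by standard facts on rectifiable sets'' is false in general: a rectifiable set of finite measure can be dense, and its closure can have infinite measure. The paper has to work for this point: in Step 4 of the proof of Proposition \ref{prop_approximation} the closure of the bad-cube boundaries is shown to add only points of $K_n\cap\partial B(0,R)$, and $R$ is chosen beforehand so that $\HH^{N-1}(K_n\cap\partial B(0,R))=0$. Without an analogous geometric argument (and without trace matching of $\tilde u_n$ and $u_n$ on $\partial B(0,\rho)$, which your coarea selection of $\rho$ does not ensure, since it controls neither $\omega_n$ nor the small components on the sphere), admissibility and closedness of the pair $(\tilde u_n,\tilde K_n)$ are not established.
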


We will need to apply this result on nested balls, and it will be important to ensure that the limit function $u$ and the sequence of affine maps can be chosen to be same.

\begin{remark}[Compactness on nested balls]\label{rmk:nestBalls}
{\normalfont
Applying Lemma \ref{lem_compactness} in $B(0,1)$ {(with $t\equiv 1$)} and subsequently in \BBB $B(0, t )$ \EEE for some $ t \EEE  \in (0,1)$, we find (corresponding to a subsequence of a subsequence) functions $\tilde{u}_n$, $a_n$, and $u$ along with $\tilde{u}_n^t$, $a_n^t$, and $u^t$, respectively. We show that we may take $a_n^t = a_n$ and $u^t = u$.

We know that $u_n - a_n$ converges to $u$ in measure and $u_n - a_n^t$ converges to $u^t$ in measure in $B(0,t/2)$, so $a_n^t - a_n$ converges to $u - u^t$ in measure in $B(0,t/2)$.
As $a_n^t - a_n$ is a sequence of affine maps, convergence in measure actually implies convergence in $L^2(B(0,1);\mathbb{R}^N)$, see for instance   \cite[Lemma 3.7]{FLS}. The limit of $a_n^t- a_n$ is necessarily a rigid motion, denoted by $a$, and we have $u = u^t + a$ in $B(0,t)$.
It follows that $\tilde{u}_n^t - a_n \to u$ in $L^2_{\mathrm{loc}}(B(0,t);\R^N)$.
We conclude that, even if we apply the lemma in different nested balls, we can always take the same rigid motions $a_n$ and the same limit function $u$.}
\end{remark}

We can now investigate limits of quasiminimizing sequences with vanishing jump sets. 
The proof is close to \cite[Proposition 3.4]{CFI} and \cite[Theorem 4]{CCI}, but we allow for different weights $M$ and $M^{-1}$ on the surface energy. In addition, we account for the topological {constraint {within the} minimality condition} using a trick observed in the scalar setting by {\sc Sideau} \cite{Si}, see also \cite[Chapter 72]{David}. When we later apply the proposition, the constant $c_n$ will arise from a renormalization of the Griffith energy. As before, we state the result for pairs $(u_n,K_n)$, but the same statement holds for $GSBD$ quasiminimizers when one replaces $K_n$ by $J_{u_n}$.

\begin{proposition}\label{prop_compactness}
    Let $(u_n,K_n)$ be a sequence of pairs {in $B(0,1)$} with vanishing crack, i.e., $\HH^{N-1}(K_n) \to 0$.
    We assume that there exist some $M \geq 1$, a sequence $c_n \geq 0$, and a sequence $\varepsilon_n \to 0$ such that
    \begin{equation}\label{eqn:bddAssump}
        \sup_{ n \in \N} \left(\int_{B(0,1)} \abs{e(u_n)}^2 \dd{x} + c_n \HH^{N-1}(K_n)\right) < + \infty,
    \end{equation}
    and  that \EEE for all $n\in \N$, all $t \in (0,1)$, and all topological competitors $(v,L)$ of $(u_n,K_n)$ in $B(0,t)$, we have
    \begin{align}
        \int_{B(0,t)} & \CC{e(u_n)} \dd{x} + c_n M^{-1} \HH^{N-1}\big(K_n \cap B(0,t)\big) \nonumber
        \\ & \leq \int_{B(0,t)} \CC{e(v)} \dd{x} + c_n M \HH^{N-1}\big(L \cap B(0,t)\big) + \varepsilon_n. \label{eqn:qminProp}
    \end{align}
    Then, up to extracting a subsequence, there exists a function $u \in W^{1,2}(B(0,1);\R^N)$ such that for all $t \in (0,1)$,
    \begin{equation}\label{eqn:compconc1}
    \begin{aligned}
                &\lim_{n \to \infty} \int_{B(0,t)} \CC{e(u_n)} \dd{x} = \int_{B(0,t)} \CC{e(u)} \dd{x},\\
                &\lim_{n \to \infty} c_n \HH^{N-1}\big(K_n \cap B(0,t)\big) = 0,
    \end{aligned}
    \end{equation}
    and such that
    \begin{equation} \label{eqn:compconc2}
        \int_{B(0,1)} \CC{e(u)} \dd{x} \leq \int_{B(0,1)} \CC{e(v)} \dd{x}
    \end{equation}
    for all $v \in u + W^{1,2}_0(B(0,1);\R^N)$.
\end{proposition}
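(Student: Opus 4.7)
The plan is to combine the approximation provided by Lemma \ref{lem_compactness} with the quasiminimality hypothesis \eqref{eqn:qminProp}, following the scheme of \cite[Proposition~3.4]{CFI} and \cite[Theorem~4]{CCI}, and to treat the topological constraint by a boundary correction in the spirit of \cite{Si}.

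\emph{Step 1 (Compactness).} From \eqref{eqn:bddAssump}, $e(u_n)$ is uniformly bounded in $L^2(B(0,1))$. I would exhaust $B(0,1)$ by radii $t_k \uparrow 1$, apply Lemma \ref{lem_compactness} on each $B(0,t_k)$, and extract a diagonal subsequence, invoking Remark \ref{rmk:nestBalls} to secure a single sequence of rigid motions $a_n$ and a single limit $u \in W^{1,2}(B(0,1);\R^N)$ for which, for every $k$, an approximation $(\tilde u_n^{(k)}, \tilde K_n^{(k)})$ satisfying the conclusions of the lemma on $B(0,t_k)$ is available. In particular $\tilde u_n^{(k)} - a_n \to u$ in $L^2_{\mathrm{loc}}(B(0,t_k))$, $\{\tilde u_n^{(k)} \neq u_n\} \subset\subset B(0,t_k)$, and Lemma \ref{lem_compactness}(4) already yields $\int_{B(0,t)} \CC{e(u)} \dd{x} \leq \liminf_n \int_{B(0,t)} \CC{e(u_n)} \dd{x}$ for every $t \in (0,1)$.

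\emph{Step 2 (Competitors and limsup inequality).} For $t \in (0,1)$ and a test $v$ with $v - u \in W^{1,2}_0(B(0,1);\R^N)$ and $\{v \neq u\} \subset\subset B(0,t)$, I pick $s \in (t,1)$, $t_k \in (t,s)$, and $\varphi \in C^\infty_c(B(0,t_k);[0,1])$ with $\varphi \equiv 1$ on $B(0,t)$, and define
\[
w_n := \varphi(v + a_n) + (1-\varphi)\tilde u_n^{(k)} \ \text{ on } B(0,s), \quad w_n := u_n \ \text{ elsewhere},
\]
together with $L_n := \bigl(K_n \setminus B(0,s)\bigr) \cup \bigl(\tilde K_n^{(k)} \cap B(0,s)\bigr) \cup A_n$, where $A_n \subset \partial B(0,s)$ is a small correction ensuring that $(w_n, L_n)$ is a topological competitor of $(u_n, K_n)$ in $B(0,s)$. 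From $e(w_n) = \varphi\, e(v) + (1-\varphi)\, e(\tilde u_n^{(k)}) + \nabla \varphi \odot (v + a_n - \tilde u_n^{(k)})$, the identity $v = u$ on $\mathrm{supp}(\nabla \varphi)$, and $\tilde u_n^{(k)} - a_n \to u$ in $L^2_{\mathrm{loc}}$, the last term is $o(1)$ in $L^2$. Plugging into \eqref{eqn:qminProp} and transferring annular contributions from $\tilde K_n^{(k)}$ and $e(\tilde u_n^{(k)})$ to $K_n$ and $e(u_n)$ via Lemma \ref{lem_compactness}(3), one obtains
\[
\limsup_n \int_{B(0,s)} \CC{e(u_n)} \dd{x} + \bigl(M^{-1} - M(1+o(1))\bigr) \limsup_n c_n \HH^{N-1}(K_n \cap B(0,s)) \leq \int_{B(0,t)} \CC{e(v)} \dd{x} + R(s,t),
\]
where $R(s,t)$ collects the annular transition terms and can be made arbitrarily small by selecting $t$ to be a ``good radius'' (so that $\int_{A_{t,s}} \CC{e(u)} \dd{x}$ and the corresponding $\limsup_n \int_{A_{t,s}} \CC{e(u_n)} \dd{x}$ tend to $0$ as $s \to t^+$). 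Taking first $v = u$ and letting $s \to t^+$ forces, because $M \geq 1$, both $\lim_n c_n \HH^{N-1}(K_n \cap B(0,t)) = 0$ and $\lim_n \int_{B(0,t)} \CC{e(u_n)} \dd{x} = \int_{B(0,t)} \CC{e(u)} \dd{x}$, which is \eqref{eqn:compconc1}. Revisiting the inequality for an arbitrary $v$, the gap term now vanishes; letting $t \to 1^-$ and invoking density of compactly supported perturbations in $u + W^{1,2}_0(B(0,1);\R^N)$ produces \eqref{eqn:compconc2}.

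\emph{Main obstacle.} The principal difficulty is the construction of the topological correction $A_n$. A naive choice like $A_n = \partial B(0,s)$ would contribute $c_n M \sigma_{N-1} s^{N-1}$ to the right-hand side of \eqref{eqn:qminProp}; since \eqref{eqn:bddAssump} only bounds $c_n \HH^{N-1}(K_n)$ and $c_n$ itself may blow up along a subsequence where $\HH^{N-1}(K_n) \to 0$, this would destroy the argument. Following the trick of Siaudeau (cf.\ \cite[Chapter~72]{David}), one exploits the smallness of $\HH^{N-1}(K_n)$ itself to build $A_n \subset \partial B(0,s)$ with $\HH^{N-1}(A_n) \leq C \HH^{N-1}(K_n) \to 0$, so that $c_n \HH^{N-1}(A_n)$ remains bounded and in fact vanishes. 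Verifying that this $A_n$ genuinely preserves the topological separation \eqref{eq_topological} in the vectorial elasticity setting is the technical heart of the proof; in the $GSBD$ case no topological correction is required ($A_n = \emptyset$), and the argument is correspondingly simpler.
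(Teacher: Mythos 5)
There is a genuine gap, and it sits exactly where the weights $M^{-1}$ and $M$ have to be played against each other. Your competitor crack $L_n$ keeps $\tilde K_n^{(k)}\cap B(0,s)$ in the \emph{whole} inner ball, but the approximation of Proposition \ref{prop_approximation} only enlarges the crack ($\tilde K_n\supset K_n$), so $L_n\supset K_n\cap B(0,t)$ and the quasiminimality inequality \eqref{eqn:qminProp} then carries $c_nM^{-1}\HH^{N-1}(K_n\cap\cdot)$ on the left against $c_nM\HH^{N-1}(K_n\cap\cdot)$ (at least) on the right. Your own displayed inequality records this: the coefficient $\bigl(M^{-1}-M(1+o(1))\bigr)$ of the crack term is nonpositive, and a nonpositive multiple of a nonnegative quantity forces nothing — the step ``because $M\geq 1$, both limits follow'' is a non sequitur. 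For $M>1$ you do not even recover the upper bound $\limsup_n\int_{B(0,t)}\CC{e(u_n)}\dd{x}\leq\int_{B(0,t)}\CC{e(u)}\dd{x}$, so neither part of \eqref{eqn:compconc1} is obtained. The cure is the one used in the paper: the competitor's crack must be \emph{deleted} wherever the competitor displacement is Sobolev, i.e.\ $L_n$ should contain only the part of $\tilde K_n^{(k)}$ in the thin transition annulus between the radius where $\varphi\equiv1$ and the outer radius (plus the topological correction); its measure is then controlled by $\beta(s)-\beta(t)$ (in your orientation of the radii), which vanishes as the radii collapse at a continuity point of the monotone limit functions, and the term $M^{-1}\lim_n c_n\HH^{N-1}(K_n\cap B(0,t))$ survives on the left with a positive weight, which is what forces it to be zero when $v=u$.

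The second problem is that the ``technical heart'' — the topological correction $A_n$ — is not constructed, and the quantitative claim you attach to it is insufficient: a bound $\HH^{N-1}(A_n)\leq C\,\HH^{N-1}(K_n)$ only gives boundedness of $c_n\HH^{N-1}(A_n)$ via \eqref{eqn:bddAssump}, not that it vanishes (and it must vanish, since it enters with weight $M$ on the right). The paper's argument produces the extra power needed: ordering the connected components $(V_i)$ of $B(0,t')\setminus K_n$ by volume, the relative isoperimetric inequality and the Caccioppoli-partition identity give $|B(0,t')\setminus V_0|\leq C\,\HH^{N-1}(K_n)^{N/(N-1)}$, and a coarea/mean-value selection of an $n$-dependent radius $\rho_n\in(t,t')$ yields a spherical correction with $\HH^{N-1}\bigl(F_n\cap\partial B(0,\rho_n)\bigr)\leq C(t')\,\HH^{N-1}(K_n)^{N/(N-1)}$, so that $c_n\HH^{N-1}(A_n)\leq C\,c_n\HH^{N-1}(K_n)\,\HH^{N-1}(K_n)^{1/(N-1)}\to0$; one then checks the separation property \eqref{eq_topological} by the path argument through $V_0$. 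Note also that placing $A_n$ on the fixed sphere $\partial B(0,s)$ while comparing in $B(0,s)$ is not admissible, since $\partial B(0,s)$ lies outside the open ball and the competitor condition requires $L_n\setminus B(0,s)=K_n\setminus B(0,s)$; the correction sphere has to sit strictly inside a slightly larger comparison ball, which is why the paper introduces the auxiliary radius $t'$ and only afterwards lets $t'\downarrow t$.
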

\begin{proof}
\textit{Step 1 (Approximating functions and subsequences).}
    Since the functions
    \begin{align*}
        \alpha_n &: t \mapsto \int_{B(0,t)} \CC{e(u_n)} \dd{x}\\
        \beta_n &: t \mapsto c_n \HH^{N-1}\big(K_n \cap B(0,t)\big)
    \end{align*}
    are non-decreasing and uniformly bounded on $[0,1]$ by \eqref{eqn:bddAssump}, we may find two non-decreasing functions $\alpha, \beta : [0,1] \to \R_+$ and extract a subsequence (not relabeled) such that for all $t \in [0,1]$ it holds that
      \begin{subequations}
    \begin{equation}\label{for the end1}
        \lim_{n \to \infty} \int_{B(0,t)} \CC{e(u_n)} \dd{x} = \alpha(t),
        \end{equation}
            \begin{equation}\label{for the end2}
        \lim_{n \to \infty} c_n \HH^{N-1}\big(K_n\cap B(0,t)\big) = \beta(t).
  \end{equation}
   \end{subequations}
   By Lemma \ref{lem_compactness} and Remark \ref{rmk:nestBalls}, there exist a sequence of rigid motions $(a_n)_n$ and a Sobolev function $u \in W^{1,2}(B(0,1);\R^N)$ such that 
  
    \begin{equation}\label{eqn:alphaIneq}
        \int_{B(0,t)} \CC{e(u)} \dd{x} \leq \alpha(t) \quad \text{  for all $t \in (0,1]$,}
    \end{equation}
    and for all $t \in (0,1]$, we can extract a further subsequence (which depends on $t$ and is not relabeled)  to \EEE find pairs $(\tilde u_n,\tilde K_n)$ {in $B(0,1)$} satisfying \EEE
    \begin{enumerate}
    \item \label{eqn:item1} $\lbrace    \tilde{u}_n \ne u_n\rbrace \subset \subset  B(0,t) \EEE $ and $\tilde K_n \triangle K_n \subset\subset  B(0,t) \EEE $;
        \item the functions $\tilde{u}_n - a_n$ converge to $u$ in $L^2_{\mathrm{loc}}(B(0,t);\R^N)$;
        \item for all $r,s$ such that {$r < s < t$}, we have
            \begin{subequations}
                \begin{equation}\label{eq_cv_tildeun1}
                    \limsup_{n \to \infty} \int_{B(0,t) \setminus B(0,s)} \CC{e(\tilde{u}_n)} \dd{x} \leq \alpha(t) - \alpha(r)
                \end{equation}
                and
                \begin{equation}\label{eq_cv_tildeun2}
                    \limsup_{n \to \infty}  c_n \EEE \HH^{N-1}\big(\tilde K_n \cap B(0,t) \setminus B(0,s)\big) \leq \beta(t) - \beta(s).
                \end{equation}
            \end{subequations}
    \end{enumerate}

\textit{Step 2 (Construction of competitors).}
    Now we fix $t \in (0,1)$ as a point of continuity of $\alpha$ and $\beta$ (almost every $t$ works because these functions are non-decreasing) and let $(\tilde{u}_n,\tilde K_n)$ be as above.
    We let $v \in W^{1,2}(B(0,1);\R^N)$ be such that $\set{v \ne u} \subset \subset B(0,t)$.
    In \EEE particular, there exists $s < t$ such that $\set{v \ne u} \subset B(0,s)$. We let $\varphi \in C^\infty_c(B(0,t))$ be a cut-off function such that $\varphi = 1$ on $B(0,s)$, and we introduce the ($s$-dependent) competitor $(v_n,L_n)$ given by
    \begin{align}\label{v:n}
        v_n  := \varphi (v + a_n) + (1 - \varphi) \tilde{u}_n, \quad \quad \quad 
        L_n  :=  \tilde{K}_n \setminus B(0,s).
    \end{align}
    We are going to estimate the energy of the competitors, i.e., 
    \begin{equation*}
        \limsup_{n \to \infty} \int_{B(0,t)} \CC{e(v_n)} \dd{x} + M \HH^{N-1}(L_n \cap B(0,t)),
    \end{equation*}
    and subsequently modify each competitor $(v_n,L_n)$ to ensure that it is a topological competitor, see (\ref{eq_topological}).
    
\textit{Step 3 (Elastic energy of the competitors).}    
    We have
    \begin{equation*}
        e(v_n) = \varphi e(v) + (1 - \varphi) e(\tilde{u}_n) + \nabla \varphi \odot  (u+a_n - \tilde{u}_n), \EEE
    \end{equation*}
     where we use that $v= u$ on $\lbrace \nabla \varphi \neq 0 \rbrace$. \EEE
    The function $\C \xi : \xi$ comes from a scalar product on the space $\R^{N \times N}_{\mathrm{sym}}$ of symmetric matrices and it is temporarily more convenient to work with the underlying norm, so   for $\xi \in \R^{N \times N}_{\mathrm{sym}}$  we set \EEE
    \begin{equation*}
        \A{\xi} := \sqrt{\C \xi : \xi}.
    \end{equation*}
    By the triangle  inequality \EEE  we have
    \begin{equation*}
        \A{e(v_n)} \leq \varphi \A{e(v)} + (1 - \varphi) \A{e(u_n)}+ C \abs{\nabla \varphi} \abs{\tilde{u}_n - a_n - u}
    \end{equation*}
    for some constant $C \geq 1$ that depends on $\C$.
    It is elementary to check that 
    \begin{equation}\label{eq_elementary}
        (a + b)^2 \leq (1 + \varepsilon) a^2 + (1 + \varepsilon^{-1}) b^2 \quad \text{for all $a,b \geq 0$ and for all $\varepsilon > 0$}.
    \end{equation}
    Using \eqref{eq_elementary} and the convexity of $t \mapsto t^2$, we estimate
    \begin{align*}
        \begin{split}
            \int_{B(0,t)} \A{e(v_n)}^2 \dd{x} &\leq \int_{B(0,t)} \Big( (1 + \varepsilon)  (\varphi \A{e(v)} + (1 - \varphi) \A{e(\tilde{u}_n)})^2 +  C (1 + \varepsilon^{-1}) \abs{\nabla \varphi}^2 \abs{\tilde{u}_n - a_n - u}^2  \Big)\dd{x}
        \end{split}\\
        \begin{split}
                                           &\leq (1 + \varepsilon) \int_{B(0,t)} \varphi \A{e(v)}^2 + (1 - \varphi) \A{e(\tilde{u}_n)}^2 \dd{x}\\
                                           &\qquad + C (1 + \varepsilon^{-1}) \int_{B(0,t)} \abs{\nabla \varphi}^2 \abs{\tilde{u}_n - a_n - u}^2 \dd{x},
        \end{split}
    \end{align*}
    for a bigger constant $C$.
    Then, by construction of $\varphi$, this simplifies to
    \begin{align*}
        \int_{B(0,t)} \A{e(v_n)}^2 \dd{x} \leq & (1 + \varepsilon) \int_{B(0,t)} \A{e(v)}^2 \dd{x} + (1 + \varepsilon) \int_{B(0,t) \setminus B(0,s)} \A{e(\tilde{u}_n)}^2 \dd{x} \\
        & + C (1 + \varepsilon^{-1}) \int_{B(0,t)} \abs{\nabla \varphi}^2 \abs{\tilde{u}_n - a_n - u}^2 \dd{x}.
    \end{align*}
    Returning to the notation $\CC{\xi}$ and using $\tilde{u}_n - a_n \to u$   in $L^2_{\mathrm{loc}}(B(0,t);\R^N)$, $\varphi\in C^\infty_c(B(0,t)) $, \EEE along with \eqref{eq_cv_tildeun1}, this yields, for all {$r,s$ such that} $0 < r < s  < t \EEE$,
    \begin{equation*}
        \limsup_{n \to \infty} \int_{B(0,t)} \CC{e(v_n)} \dd{x} \leq (1 + \varepsilon) \int_{B(0,t)} \CC{e(v)} \dd{x} + (1 + \varepsilon) (\alpha(t) - \alpha(r)).
    \end{equation*}
    In the previous inequality, $\varepsilon$ can be sent to $0$ and $r\uparrow s$ to find
    \begin{equation}\label{eqn:ElastEst}
        \limsup_{n \to \infty} \int_{B(0,t)} \CC{e(v_n)} \dd{x} \leq  \int_{B(0,t)} \CC{e(v)} \dd{x} + (\alpha(t) - \alpha(s-)),
    \end{equation}
     where $\alpha(s-) := \lim_{r \uparrow s} \alpha (r)$. \EEE
    
    \textit{Step 4 (Surface energy of the competitors).} 
 By definition of $L_n$ we have
    \begin{equation*}
        L_n \cap B(0,t) = \tilde K_n \cap B(0,t) \setminus B(0,s).
    \end{equation*}
 Thus, \eqref{eq_cv_tildeun2}  gives \EEE 
    \begin{equation}\label{eqn:Lest}
        \limsup_{n \to \infty} c_n \HH^{N-1}(L_n \cap B(0,t)) \leq \beta(t) - \beta(s).
    \end{equation}

    \textit{Step 5  (Topological \EEE constraint).} 
    The pair $(v_n,L_n)$ may not be a topological competitor of $(u_n,K_n)$ in $B(0,t)$ because we have wiped out $K_n$ in the interior, possibly creating holes. To rectify this, we fix a parameter $t'\in (t,1)$, which will later be sent down to $t$, and modify $L_n$ to construct a topological competitor in $B(0,t')$. \BBB We emphasize that there is no need for this step in the case of $GSBD$ quasiminimizers because topological competitors are not considered in  that case. \EEE
   
   Let $(V_i)_{i \in \N_0}$ denote the open and connected components of $B(0,t') \setminus K_n$, ordered in such a way that $\abs{V_i} \geq \abs{V_{i+1}}$ (we omit the dependence on $n$).
    We show that the  volume of $V_0 $ is big \EEE in the sense that
    \begin{equation}\label{eq_isoperimetric}
        \abs{B(0,t') \setminus V_0} \leq C \HH^{N-1}(K_n)^{N/(N-1)},
    \end{equation}
    where $C>0$ is a  dimensional \EEE constant.  Indeed, \EEE for each $i$, we have $\partial V_i \cap B(0,t') \subset K_n$ and $\HH^{N-1}(K_n) < +\infty$, so by \cite[Proposition 3.62]{AFP} $V_i$ has finite perimeter in $B(0,t')$ with $\partial^* V_i \cap B(0,t') \subset K_n$.
    As $(V_i)_{i \in \N_0}$ forms a Caccioppoli partition of $B(0,t')$, by \cite[Theorem 4.17]{AFP} it follows that
    \begin{equation}\label{eq_ViKn}
        \sum_{ i \ge 0 \EEE} \HH^{N-1} \big( \partial^* V_i \cap  B(0,t') \big)  \le \EEE 2 \HH^{N-1} \big( K_n  \cap  B(0,t') \big). 
    \end{equation} 
    From the ordering properties of the sequence, we see that for all $i \geq 1$,
    \begin{equation*}
        \abs{V_i} \leq \abs{V_0} \leq \abs{B(0,t') \setminus V_i},
    \end{equation*}
    and thus we can estimate by the  relative \EEE isoperimetric inequality
    \begin{equation*}
        \abs{V_i} \leq C \HH^{N-1}\big(\partial^* V_i \cap B(0,t') \big)^{N/(N-1)}.
    \end{equation*}
    We sum the above inequality over $i \geq 1$, use the superadditivity of $t \mapsto t^{N/(N-1)}$,
     and \EEE  apply \eqref{eq_ViKn} to obtain the desired estimate \eqref{eq_isoperimetric}.

    We define the relatively closed set $F_n := B(0,t') \setminus V_0$ and note by the coarea formula that 
    \begin{equation*}
        \abs{F_n} = \int_0^{ t'} \HH^{N-1}\big(F_n \cap \partial B(0,\rho)\big) \dd{\rho}.
    \end{equation*}
    Using this equality and \eqref{eq_isoperimetric}, we find a radius $\rho_n \in (t,t')$ such that
    \begin{equation}\label{eqn:Fest}
        \HH^{N-1}\big(F_n \cap \partial B(0,\rho_n)\big) \leq (t'-t)^{-1} \abs{F_n} \leq {C(t')} \HH^{N-1}(K_n)^{N/(N-1)}.
    \end{equation}
    We then replace $L_n$ by
    \begin{equation*}
        G_n := L_n \cup \big(F_n \cap \partial B(0,\rho_n)\big).
    \end{equation*}
    Since $c_n \HH^{N-1}(K_n)$ is uniformly bounded and $\HH^{N-1}(K_n) \to 0$, we use \eqref{eqn:Fest} to deduce that
    \begin{equation}\label{eq_Fn}
        \lim_{n \to \infty} c_n \HH^{N-1}\big(F_n \cap \partial B(0,\rho_n)\big) = 0,
    \end{equation}
    hence the contribution of $F_n$ to the crack will disappear in the limit.
    
    We now justify that $(v_n,G_n)$ is a topological competitor of $(u_n,K_n)$ in the ball $B(0,t')$ and, in particular, satisfies (\ref{eq_topological}).
    Arguing by contraposition, we consider two points $x$ and $y$ in $ B(0,1) \setminus  ( B(0,t') \cup K_n ) \EEE $ that are connected by a continuous path which is disjoint from $G_n$.
    If the path does not intersect $\partial B(0,\rho_n)$, then it stays in the complement of $\overline{B(0,\rho_n)}$ where $G_n$ coincides with $K_n$. Therefore, this continuous path does not meet $K_n$, and $x$ and $y$ are also connected in $B(0,1)\setminus K_n$. If the path meets $\partial B(0,\rho_n)$, it can only be at a point of $V_0$. Considering the portion of the path starting from $x$ to the first time it meets $\partial B(0,\rho_n)$, we see that $x$ is connected to $V_0$ in the complement of $K_n$. Likewise, we see that $y$ is also connected to $V_0$ in the complement of $K_n$. Since $V_0$ is an open connected set disjoint from $K_n$, $x$ and $y$ are also connected in $B(0,1)\setminus K_n$. This shows that (\ref{eq_topological}) holds, as desired.
    
  \textit{Step 6  (Limit passage \EEE using quasiminimality).}   
 By item (\ref{eqn:item1}) in Step 1 and \eqref{v:n} we get   $e(v_n) = e(u_n)$ a.e.\ in $B(0,t') \setminus B(0,t)$. This along with     \EEE \eqref{for the end1} and (\ref{eqn:ElastEst})  gives 
    \begin{equation*}
        \limsup_{n \to \infty} \int_{B(0,t')} \CC{e(v_n)} \dd{x} \leq \int_{B(0,t)} \CC{e(v)} \dd{x} + (\alpha(t') - \alpha(s-)){.}
    \end{equation*}
      Similarly, by  \eqref{for the end2}, \EEE  (\ref{eqn:Lest}),   and \eqref{eq_Fn}, \EEE     
    we have
    \begin{equation*}
        \limsup_{n \to \infty} c_n \HH^{N-1}\big(G_n \cap B(0,t')\big) \leq \beta(t') - \beta(s).
    \end{equation*}
    Finally, we apply the quasiminimality property of $(u_n,K_n)$ in $B(0,t')$, see (\ref{eqn:qminProp}), to find
    \begin{align*}
        \int_{B(0,t')} & \CC{e(u_n)} \dd{x} + c_n M^{-1} \HH^{N-1}\big(K_n \cap B(0,t')\big) \\
        & \leq \int_{B(0,t')} \CC{e(v_n)} \dd{x} + c_n M \HH^{N-1}\big(G_n \cap B(0,t')\big) + \varepsilon_n.
    \end{align*}
    Taking the $\limsup$ of both sides  and using \eqref{for the end1}--\eqref{for the end2}, \EEE we find
    \begin{equation*} 
        \alpha(t') + M^{-1} \beta(t') \leq \int_{B(0,t)} \CC{e(v)} \dd{x} + (\alpha(t') - \alpha(s-)) +  M \EEE (\beta(t') - \beta(s)).
    \end{equation*}
    As $t$ is a point of continuity for both $\alpha$ and $\beta$, we can  let \EEE  $t'\downarrow t$ and $s\uparrow t$ to recover
    \begin{equation}\label{eq_energy_limit2}
        \alpha(t) + M^{-1} \beta(t) \leq \int_{B(0,t)} \CC{e(v)} \dd{x}.
    \end{equation}
    {This holds a priori for almost all $t \in (0,1)$ and all $v \in W^{1,2}(B(0,1);\R^N)$ such that $\set{u \ne v} \subset \subset B(0,1)$.}
    Since $\alpha$ and $\beta$ are {non-decreasing} {and the right-hand side of (\ref{eq_energy_limit2}) is continuous in $t$}, we deduce that (\ref{eq_energy_limit2}) actually holds for all $t \in (0,1)$.
    Taking $v = u$ and recalling (\ref{eqn:alphaIneq}), we find that  
    $$\alpha(t) = \int_{B(0,t)} \CC{e(u)} \dd{x} \quad \quad \text{and} \quad \quad \beta(t) = 0 $$ for all $t\in (0,1)$.  This along with \eqref{for the end1}--\eqref{for the end2} shows \EEE \eqref{eqn:compconc1}. Using these two equalities in \eqref{eq_energy_limit2}, we have for all $v \in W^{1,2}(B(0,1);\R^N)$ with $\set{v \ne u} \subset \subset B(0,t)$ that
    \begin{equation*}
        \int_{B(0,t)} \CC{e(u)} \dd{x} \leq \int_{B(0,t)} \CC{e(v)} \dd{x}.
    \end{equation*}
  Then, \EEE   it follows easily that $u$ minimizes the elastic energy $v \mapsto \int_{B(0,1)} \CC{e(v)} \dd{x}$ subject to its own Dirichlet condition on $\partial B(0,1)$, giving \eqref{eqn:compconc2} and concluding the proof.
\end{proof}

 Before turning to the proof of the decay estimate, we observe \EEE  that quasiminimizers have a natural scaling coming from the energy.
 
\begin{remark}[Rescaling]\label{rmk_scaling}
{\normalfont 
    If a pair $(u,K)$ is a topological $M$-quasiminimizer in $B(x_0,r_0)$ with gauge $h$, then the pair $(\hat{u},\hat{K})$ defined by
    \begin{equation*}
        \hat{u}(x) := r_0^{-1/2} u(x_0 + x r_0) \quad \text{and} \quad  \hat{K} := r_0^{-1} (K - x_0)
    \end{equation*}
    is a topological $M$-quasiminimizer in $B(0,1)$, with gauge $\hat{h}(t) := h(r_0 t)$. Moreover, one has for all $t \in (0,1]$,
    \begin{align*}
        \int_{B(0,t) \setminus \hat{K}} &\CC{e(\hat{u})} \dd{x} + \HH^{N-1}\big(\hat{K} \cap B(0,t)\big) \\
        & = \frac{1}{r_0^{N-1}} \left(\int_{B(x_0,t r_0) \setminus K} \CC{e(u)} \dd{x} + \HH^{N-1}\big(K \cap B(x_0,t r_0)\big)\right).
    \end{align*}
    The same scaling is at play for $GSBD$  $M$-quasiminimizers, except it only requires defining $\hat{u}$ as above, as this implicitly includes the rescaling for the jump set $J_u.$  }
\end{remark}
We finally pass to the proof of the decay estimate for the energy.

\begin{proof}[Proof of Proposition \ref{prop_decay}]
    By scaling (Remark \ref{rmk_scaling}), it suffices to prove the result in $B(0,1)$.
    We proceed by contradiction and assume that for some fixed $M \geq 1$, $\tau \in (0,1)$, and $C>0$, there exist a sequence $\varepsilon_n \to 0$ and a sequence of pairs $(u_n,K_n)$ such that each $(u_n,K_n)$ is $M$-quasiminimal with gauge $h_n$ in $B(0,1)$ and
    \begin{equation*}
        \HH^{N-1}(K_n \cap B(0,1)) \leq \varepsilon_n, \quad h_n(1) \leq \varepsilon_n G_n(0,1)
    \end{equation*}
    but
    \begin{equation}\label{eq_Gk}
        G_n(0,\tau) > C \tau^N G_n(0,1),
    \end{equation}
    where
    \begin{align*}
        G_n(0,t) &:= \int_{B(0,t)} \CC{e(u_n)} \dd{x} + \HH^{N-1}\big(K_n \cap B(0,t)\big)\quad \text{ for all } t \in (0,1).
    \end{align*}
    We will derive a contradiction by choosing $C>0$ large enough (depending only on $N$, $\C$ but not  on \EEE $\tau$ or $M$).
    We set $g_n := G_n(0,1)$ and observe using \eqref{eq_Gk} that {$g_n \in (0,\infty)$}.
    We then define $w_n := g_n^{-1/2} u_n$. One can check that
    \begin{equation}\label{eq_vk_energy}
        \int_{B(0,1)} \CC{e(w_n)} \dd{x} + g_n^{-1} \HH^{N-1}\big(K_n \cap B(0,1)\big) = 1,
    \end{equation}
    \begin{equation}\label{eq_tau}
        \int_{B(0,\tau)} \CC{e(w_n)} \dd{x} + g_n^{-1} \HH^{N-1}\big(K_n \cap B(0,\tau)\big) > C \tau^N
    \end{equation}
    and {that} for all $t \in (0,1)$, for all pairs $(v,L)$ with $\lbrace   v \ne w_n\rbrace \subset \subset B(0,t)$ and $L\triangle K_n \subset\subset B(0,t)$, we have
    \begin{multline*}
        \int_{B(0,t)} \CC{e(w_n)} \dd{x} + M^{-1}  g_n^{-1} \EEE  \HH^{N-1}\big(K_n \cap B(0,t)\big) \\\leq \int_{B(0,t)} \CC{e(v)} \dd{x} + M g_n^{-1} \EEE \HH^{N-1}\big(L \cap B(0,t)\big) + \varepsilon_n.
    \end{multline*}
   By an application of Proposition \ref{prop_compactness}  (for $c_n = g_n^{-1}$), \EEE we find a subsequence (not relabeled) and a function \\ ${w \in W^{1,2}(B(0,1);\R^N)}$ such that for all $t \in  (0,1)$, \EEE
    \begin{align*}
                &\lim_{n \to \infty} \int_{B(0,t)} \CC{e(w_n)} \dd{x} = \int_{B(0,t)} \CC{e(w)} \dd{x},\\
                &\lim_{n \to \infty} g_n^{-1} \HH^{N-1}(K_n \cap B(0,t)) = 0,
    \end{align*}
    and $w$ is a minimizer of the elastic energy $v \mapsto \EEE \int_{B(0,1)} \CC{e(v)}\dd{x}$ subject to its own boundary conditions.
     Thanks to Korn's inequality, the elasticity tensor $\C$ is coercive in the sense that $\norm{\varphi}^2_{H^1} \leq C \int \CC{e(\varphi)}$ for all test functions $\varphi$. This allows the application of the standard regularity theory of elliptic systems \cite[Section 4 and 5]{giaquinta}, and in particular there exist a constant $C_0 \geq 1$ (depending on $N$, $\C$) such that \EEE
    \begin{equation}\label{eqn:ellipDecay}
        \int_{B(0,\tau)} \CC{e( w \EEE )} \dd{x} \leq C_0 \tau^{N} \int_{B(0,1)} \CC{e(  w \EEE )} \dd{x}.
    \end{equation}
    But passing to the limit in \eqref{eq_vk_energy} and \eqref{eq_tau} yields
    \begin{align*}
        \int_{B(0,1)} \CC{e(  w \EEE )} \dd{x}   \le \EEE 1, \quad \quad \quad           \int_{B(0,\tau)} \CC{e( w \EEE )} \dd{x}  \geq C \tau^N.
    \end{align*}
    Plugging these into (\ref{eqn:ellipDecay}), we obtain a contradiction by taking $C = 2 C_0$.
\end{proof}

\section{Approximation of pairs with a small crack set}\label{sec:Approx}
\newcommand{\ds}{d}

The goal of this section is to establish the compactness Lemma \ref{lem_compactness} for pairs $(u,K)$ and $GSBD$ functions. This was shown by {\sc Chambolle et al.} \cite{CCI} for $GSBD$ functions on the unit cube. Here we discuss how the proof can be adapted to the unit ball and, for topological quasiminimizers, how one ensures that the constructed approximations $(\tilde u_n, \tilde K_n)$ are \textit{admissible} in the sense that $\tilde u_n$ is Sobolev away from the crack and that the constructed crack is closed. 

As in \cite{CCI}, an essential step in the proof is a mollification procedure to construct an approximation of a function with small crack. We adapt this approach to our setting and emphasize the necessary changes to account for (1) working on the unit ball and (2) ensuring that the pair $ (\tilde u_n,\tilde K_n)$ \EEE is admissible. These are the only changes in the proof, and they do not deal with the topological nature of the competitors in any other way.  Given that our result is an adaptation of  the original technical proof for $GSBD$ in \cite[Theorem 3]{CCI}, we \EEE assume that the reader interested in these details is already familiar  with the proof in \cite{CCI}, \EEE and we discuss how the proof can be adapted to our setting.

\newcommand{\diam}[1]{\ell_{#1}}
\begin{proposition}\label{prop_approximation}
    There exist   constants $\eta > 0$, $c > 1$, $s > 0$  (depending on $N$ and $\C$) \EEE such that the following holds.
    Let $(u,K)$ be a pair in $B(0,1)$ with 
    \begin{equation*}
        \int_{B(0,1)  } \abs{e(u)}^2 \dd{x} + \HH^{N-1}(K) < +\infty.
    \end{equation*}
    Assume that $\delta := \HH^{N-1}(K)^{1/N}  < \EEE \eta$.   Then, \EEE there exist a radius $R \in (1 - \sqrt{\delta},1)$, a pair $(\tilde{u},\tilde{K})$ in $B(0,1)$, and a Borel set $\tilde{\omega} \subset B(0,R)$ such that
    \begin{enumerate}
        \item \label{lab:K1} $\tilde{u}$ is $C^\infty$ in $B(0,1-\sqrt{\delta})$ and $(\tilde{u},\tilde{K}) = (u,K)$ in $B(0,1) \setminus B(0,R)$;
        \item \label{lab:K2} we have
            \begin{equation}\label{the crack}
                \HH^{N-1}(\tilde{K} \setminus K) \leq c \sqrt{\delta} \HH^{N-1}\big(K \setminus B(0,1-\sqrt{\delta})\big)
            \end{equation}
            and for all open set $\Omega \subset B(0,1)$,
            \begin{equation}\label{the elastic}
                \int_{\Omega} \CC{e(\tilde{u})} \dd{x} \leq (1+c \delta^s)\int_{\Omega_{\delta}} \CC{e(u)} \dd{x} + c  \sqrt{\delta}  \int_{B(0,1)\setminus B(0,1-\sqrt{\delta})}  \CC{e(u)} \dd{x},
            \end{equation}
            where $\Omega_{\delta} = \set{x \in B(0,1) \colon \mathrm{dist}(x,\Omega) < c\delta}$;  
        \item \label{lab:K3} we have
            \begin{equation*}
                \abs{\tilde{\omega}} \leq c \delta \HH^{N-1}(K)  
            \end{equation*}
            and
            \begin{equation*}
                \int_{B(0,1) \setminus \tilde{\omega}} \abs{\tilde{u} - u}^2 \dd{x} \leq c \delta^2 \int_{B(0,1)} \abs{e(u)}^2 \dd{x}.
            \end{equation*}
    \end{enumerate}
\end{proposition}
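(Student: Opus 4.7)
The strategy is to follow the covering-and-mollification scheme of \cite{CCI}, adapting it from the unit cube to the unit ball and arranging the construction so that the output $(\tilde u,\tilde K)$ is an admissible pair, that is, $\tilde K$ is relatively closed in $B(0,1)$ and $\tilde u \in W^{1,2}_{\rm loc}(B(0,1)\setminus \tilde K;\R^N)$. I decompose $B(0,1-\sqrt\delta/2)$ into a dyadic grid $\{Q_i\}$ of cubes of edge length $\asymp \delta$ and declare a cube to be \emph{bad} if one of its $3^N$-neighbors meets $K$, \emph{good} otherwise. Since $\mathcal{H}^{N-1}(K)=\delta^N$, the combined volume of the bad cubes is bounded by $C\delta\cdot \mathcal{H}^{N-1}(K)$, which will generate the exceptional set $\tilde\omega$ and its measure bound in item~(\ref{lab:K3}).

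On each good cube $Q_i$ the enlarged cube $3Q_i$ does not meet $K$, so $u$ lies in $W^{1,2}(3Q_i;\R^N)$ and a standard Korn estimate with a rigid-motion correction $a_i$ gives $\|u-a_i\|_{L^2(3Q_i)}\le C\delta\|e(u)\|_{L^2(3Q_i)}$. I define a smoothed function $\bar u$ on the union of good cubes by gluing the mollifications $(u-a_i)\ast\rho_{\delta/8}+a_i$ via a partition of unity $\{\varphi_i\}$ subordinate to the $Q_i$, and extending across bad cubes by keeping the representative $a_i$ of a neighboring good cube. The elastic energy estimate \eqref{the elastic} follows from the arguments of \cite[Theorem~3]{CCI}: the first term absorbs the convolution error up to a factor $1+c\delta^s$ coming from a Poincaré-type estimate controlling the discrepancy between rigid motions in adjacent good cubes, while the thin annular contribution will be captured in the second term after the gluing at radius $R$.

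To adapt the construction to the ball and close off the approximation, I pick the radius $R$ by a coarea/Fubini argument: averaging the slice mass of $K$ and $|e(u)|^2$ over $\rho\in(1-\sqrt\delta,\,1-\sqrt\delta/2)$, I find $R$ with $\mathcal{H}^{N-2}(K\cap\partial B(0,R))\le C\delta^{-1/2}\mathcal{H}^{N-1}(K\setminus B(0,1-\sqrt\delta))$ and with the $L^2$ trace of $e(u)$ on $\partial B(0,R)$ controlled by the energy on the full annulus. I then set
\begin{equation*}
\tilde u := \chi_{B(0,R')}\,\bar u + \chi_{B(0,1)\setminus B(0,R')}\,u
\end{equation*}
for some $R'\in(1-\sqrt\delta,R)$ and define the crack
\begin{equation*}
\tilde K := \bigl(K \cap (B(0,1)\setminus B(0,R'))\bigr)\,\cup\,\partial B(0,R')\,\cup\,\bigcup_{Q_i\text{ bad, }Q_i\subset B(0,R')}\overline{Q_i}.
\end{equation*}
This set is relatively closed in $B(0,1)$, coincides with $K$ outside $B(0,R)$, and $\tilde u$ is $C^\infty$ on $B(0,1-\sqrt\delta)$ because every cube meeting that ball has been smoothed through. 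The crack estimate \eqref{the crack} comes from the cardinality of bad cubes ($\le c\delta^{-(N-1)}$ of them, each contributing $\HH^{N-1}\le c\delta^{N-1}$) together with the bound on $\partial B(0,R')$, both of which by construction are dominated by $\sqrt\delta\,\mathcal{H}^{N-1}(K\setminus B(0,1-\sqrt\delta))$ after using $\mathcal{H}^{N-1}(K)^{1/N}=\delta$.

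The main obstacle is organizing the construction so that the added crack in the transition layer respects the admissibility constraint \emph{and} the improved bound \eqref{the crack}, which is linear in $\sqrt\delta$ times the crack in the annulus rather than in the whole ball. This forces the bad cubes in the deep interior not to contribute to $\tilde K$: there I must extend $\bar u$ continuously across them, which is where the rigid-motion Korn correction and the geometry of good neighborhoods becomes essential—exactly the technical heart of \cite[Sections 3--4]{CCI} that I import and adapt. The ball-versus-cube modification itself is mild: a Whitney-type halo of cubes near $\partial B(0,1)$ is simply cut off by the sphere $\partial B(0,R')$ rather than by a flat face, and since this is done at a single generic radius, no new estimates beyond those of \cite{CCI} are needed.
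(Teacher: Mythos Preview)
Your construction has a genuine gap in the crack estimate \eqref{the crack}. You include the \emph{entire} sphere $\partial B(0,R')$ in $\tilde K$, and this alone contributes $\mathcal{H}^{N-1}(\partial B(0,R')\setminus K)\approx \sigma_{N-1}$ to $\mathcal{H}^{N-1}(\tilde K\setminus K)$, a fixed positive constant. But the right-hand side of \eqref{the crack} is at most $c\sqrt{\delta}\,\mathcal{H}^{N-1}(K)=c\,\delta^{N+1/2}\to 0$, so the inequality cannot hold. The same problem afflicts your bad-cube accounting: your own numbers ($\le c\delta^{-(N-1)}$ cubes, each contributing $\le c\delta^{N-1}$) yield a bound of order~$1$, not something dominated by $\sqrt{\delta}\,\mathcal{H}^{N-1}(K\setminus B(0,1-\sqrt{\delta}))\le \delta^{N+1/2}$. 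This is fatal downstream: in Lemma~\ref{lem_compactness}(3) one would get $\mathcal{H}^{N-1}(\tilde K_n\cap A_{s,1})\ge c>0$ uniformly rather than $(1+o(1))\mathcal{H}^{N-1}(K_n\cap A_{s,1})$, and the limit passage in Proposition~\ref{prop_compactness} collapses.

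The paper avoids this precisely by \emph{not} using a uniform grid. It takes a Whitney covering of $B(0,R)$, so the cube sides $r_i$ (and the mollification scales $\delta_i\approx r_i/10$) shrink to~$0$ near $\partial B(0,R)$; then $\tilde u$ inherits the trace of $u$ on $\partial B(0,R)$ and \emph{no} crack is added on the transition sphere. The good/bad criterion is also different and relative: a cube is bad only if $\mathcal{H}^{N-1}(K\cap q_i')>\eta\,r_i^{N-1}$, so $\mathcal{H}^{N-1}(\partial q_i)\le C\eta^{-1}\mathcal{H}^{N-1}(K\cap q_i')$ for each bad cube and the sum is controlled. Because interior Whitney cubes are large, the smallness $\delta<\eta$ forces every cube meeting $B(0,1-\sqrt{\delta})$ to be good, so bad cubes live only in the annulus---this is why \eqref{the crack} involves only $K\setminus B(0,1-\sqrt{\delta})$. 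Finally, on bad cubes the paper keeps $\tilde u=u$ (rather than extending by a neighboring rigid motion) and on good cubes uses the Korn inequality for functions with \emph{small jump set}, since good cubes may still meet $K$; your plan to use the classical Korn inequality on crack-free neighborhoods is not what is done.
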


\begin{proof}[Outline of proof]
We outline the proof and take the liberty to omit necessary details spelled out in \cite[Theorem 3]{CCI}, and instead emphasize why the setting of pairs only requires a couple  of \EEE adaptations.

\textit{Step 1 (Whitney covering).}
To  define \EEE   the approximation $(\tilde u, \tilde K)$, the idea is to find a good radius $R \in (1-\sqrt{\delta},1)$ via an averaging argument with energetically favorable properties.  In particular, $R$ is chosen such that \EEE
\begin{align}\label{an R choice}
\mathcal{H}^{N-1}\big(K \cap \partial B(0,R)\big) = 0.
\end{align}  
Then, we will make no changes to $(u,K)$ outside of the ball $B(0,R)$. To modify $(u,K)$ inside $B(0,R)$, we construct a Whitney covering of this region made up of cubes $\{q_i\}_{i\in \N}$ with each side of $q_i$ is of length $r_i$. Further, we assume $q_i' \subset\subset B(0,R)$, where $q_i'$ is a cube with the same center as $q_i$ and side length given by $3r_i/2$. Importantly, \BBB all cubes intersecting $B(0,1-\sqrt{\delta})$ have side length $\delta$ and \EEE the cubes refine as they approach $\partial B(0,R)$, meaning ${\rm diam}(q_i)\to 0$ as the center of $q_i$ approaches $\partial B(0,R)$. This is used to ensure that the trace of the modified function coincides with  the trace of \EEE  $u$. In \cite{CCI}, this step is done using an explicit covering made of dyadic cubes.  Working on the ball, as we do, requires  only minor changes, namely using a Whitney covering for the ball. \EEE

\textit{Step 2 (Good and bad cubes).}
We partition the cubes $q_i$ into \textit{good} cubes and \textit{bad} cubes, depending on  whether \EEE the cube contains a relatively large proportion of the crack. Precisely,  fixing $\eta$ as $\eta = \frac{1}{2} 8^{-N} c_*^{-1}$ with $c_*$ from \cite[(6)]{CCI}, \EEE $q_i$ is a \textit{good cube} if 
\begin{equation}\label{eqn:badcube}
\mathcal{H}^{N-1}(K\cap q_i')\leq \eta r_i^{N-1},
\end{equation}
and a \textit{bad cube} otherwise. We denote the indices of good cubes by $\mathbb{G}$ and, similarly, $\mathbb{B}$ for the bad cubes.

\textit{Step 3 (Modified function).}
We consider a partition of unity $\{\phi_i\}_{i\in \mathbb{G}}$ on the good cubes, subordinate to $\bigcup_{i\in \mathbb{G}} q_i'  \setminus \bigcup_{i\in \mathbb{B}} q_i\EEE$. In each good cube $q_i$, we consider a smoothed out function $\rho_{\delta_i}*(u\chi_{q_i'\setminus \omega_i} + a_i \chi_{\omega_i})$, where $\omega_i \subset q_i'$ and $a_i$ is a rigid motion associated to the cube through the Korn inequality for functions with a small jump set  (see \cite[(13)--(14)]{CCI}), \EEE and $\rho_{\delta_i}$ is a mollifier supported in $B(0,\delta_i)$ with $\delta_i \approx r_i/10$. The modified function $\tilde u$ is defined as
\begin{equation}\label{eqn:smoothDef}
\tilde u(x)  := \begin{cases}
\sum_{i\in \mathbb{G}} \phi_i(x) (\rho_{\delta_i}*(u\chi_{q_i'\setminus \omega_i} + a_i \chi_{\omega_i}))(x) & \text{ if }x \in \bigcup_{i\in \mathbb{G}}q_i \setminus \bigcup_{i\in \mathbb{B}}q_i, \\
u(x) & \text{ if }x \in (\bigcup_{i\in \mathbb{B}}q_i) \cup (B(0,1)\setminus B(0,R)).
\end{cases}
\end{equation}
As we have blindly stitched together the mollified function in the good cubes with the original function $u$ on the bad cubes, we cannot prevent a jump in the function across these boundaries. So, the new jump set must be defined as 
\begin{equation}\label{eqn:KtildeDef}
\tilde K = K \cup  \overline{\bigcup\nolimits_{i\in \mathbb{B}}\partial q_i},
\end{equation} \EEE
where we must take the closure to ensure  that \EEE $\tilde K$ is closed.  The assumption $\delta < \eta$ ensures that all cubes intersecting $B(0,1-\sqrt{\delta})$ lie in $\mathbb{G}$, and thus  $\tilde{u}$ is $C^\infty$ in $B(0,1-\sqrt{\delta})$. \BBB Indeed, as all cubes $q_i$ intersecting $B(0,1-\sqrt{\delta})$ have side length $\delta$, we get $\mathcal{H}^{N-1}(K\cap q_i')\leq \delta^N < \eta \delta^{N-1}  =  \eta r_i^{N-1}$, and thus \eqref{eqn:badcube} holds.  \EEE

\textit{Step 4 (Size of the closed crack).}
Estimating the size of the new crack proceeds  as \EEE in the original proof of \cite[Theorem 3]{CCI}, except one must now additionally make sure that there is no charge arising on the boundary of $\partial B(0,R)$ coming from  taking the closure of ${\bigcup_{i\in \mathbb{B}}\partial q_i}$ {in \eqref{eqn:KtildeDef}}. \EEE We show that 
\begin{equation}\label{eqn:crackContainment}
\overline{\bigcup\nolimits_{i\in \mathbb{B}}\partial q_i} \cap \partial B(0,R) \subset K \cap \partial B(0,R), 
\end{equation}
which then  by \eqref{an R choice} \EEE gives
$$\mathcal{H}^{N-1}\Big(\overline{\bigcup\nolimits_{i\in \mathbb{B}}\partial q_i} \cap \partial B(0,R)\Big) = 0. $$
To see (\ref{eqn:crackContainment}), if $x\in \overline{\bigcup_{i\in \mathbb{B}}\partial q_i} \cap \partial B(0,R),$ then there is a sequence of bad cubes $(q_{i_k})_{k\in \N}$ with $i_k\in \mathbb{B}$ and ${\rm dist}(x,q_{i_k})\to 0$ as $k\to \infty$. Since $x\in \partial B(0,R),$ we also have ${\rm diam}(q_{i_k})\to 0$. Finally, as (\ref{eqn:badcube}) fails for each bad cube, $K \cap q_{i_k}\neq \emptyset$, and we may combine the above facts to conclude that ${\rm dist}(x,K)\to 0$. This implies $x \in K \cap \partial B(0,R)$ because $K$ is closed, concluding (\ref{eqn:crackContainment}).  This along with \cite[(12)]{CCI} shows \eqref{the crack}. \EEE

\textit{Step 5 (Locally Sobolev function).}
We need to ensure that $ \tilde{u} \EEE \in W^{1,2}_{\rm loc}(\Omega \setminus \tilde K; \R^N)$. If $x\not \in \tilde K\cup \partial B(0,R)$, as the Whitney covering is locally finite in the interior of $B(0,R)$ and no change is made outside of $B(0,R)$, it is clear that there is a radius $r>0$ such that $ \tilde{u} \EEE \in W^{1,2}(B(x,r);\R^{N})$. By definition of $\tilde K$ in (\ref{eqn:KtildeDef}), if $x\in \partial B(0,R)\setminus \tilde K$, then $x \in \partial B(0,R)\setminus K$  by \eqref{eqn:crackContainment}. \EEE Consequently, there is a radius   $r >0$ \EEE such that $B(x,r)$ does not intersect $K$ or, thereby, any bad cube. As in the original proof, one can then verify  that \EEE the trace of $\tilde u$ coincides with $u$ on $\partial B(0,R)$  implying $ \tilde{u}  \in W^{1,2}(B(x,r);\R^{N})$. \EEE However, there is a simpler option in the setting of pairs $(u,K)$. If $q_i \cap K = \emptyset$, one can simply replace $\rho_{\delta_i}*(u\chi_{q_i'\setminus \omega_i} + a_i \chi_{\omega_i})$ by $u$ in the definition of $\tilde u$ in \eqref{eqn:smoothDef}, so that $\tilde u = u$ in $B(x,r)$ for $r>0$ sufficiently small.  This discussion also shows $(\tilde{u},\tilde{K}) = (u,K)$ in $B(0,1) \setminus B(0,R)$ concluding the proof of \eqref{lab:K1}. \EEE

\textit{Step 6 (Remaining arguments).} The proof of  \eqref{the elastic} and item \eqref{lab:K3} remains the same as in \cite{CCI}, see in particular  \cite[(26)--(27)]{CCI}. \EEE
\end{proof}

Finally, we turn to the proof of the compactness result used to obtain the decay estimate.

\begin{proof}[Proof of Lemma \ref{lem_compactness}]
 By rescaling it suffices to  perform the proof   for $t=1$. \EEE 
    We let $\delta_n := \HH^{N-1}(K_n)^{1/N}$. For all sufficiently large $n$, we may apply Proposition~\ref{prop_approximation}.
    We obtain a sequence of pairs $(\tilde{u}_n,\tilde{K}_n)$ and of sets $\tilde{\omega}_n$ satisfying properties (\ref{lab:K1}), (\ref{lab:K2}), and (\ref{lab:K3}) of Proposition \ref{prop_approximation}.  By (\ref{lab:K1}) we get  $\lbrace   \tilde{u}_n \neq u_n\rbrace \subset \subset B(0,1)$ and $K_n\triangle \tilde K_n\subset\subset B(0,1)$. \EEE  From (\ref{lab:K2}), we observe that for all  $s \in (0,1)$, \EEE
    \begin{equation*}
        \HH^{N-1}(\tilde{K}_n \cap   A_{s,1} \EEE ) \leq \HH^{N-1}(  K_n \EEE \cap  A_{s,1} \EEE ) + c \sqrt{\delta_n} \HH^{N-1}\big(  K_n \EEE  \setminus B(0,1-\sqrt{\delta_n})\big)
    \end{equation*}
    and for $n$ large enough, $ K_n \EEE  \setminus B(0,1-\sqrt{\delta_n}) \subset  K_n \EEE  \cap A_{s,1}$, whence
    \begin{equation*}
        \HH^{N-1}(\tilde{K}_n \cap A_{s,1}) \leq (1 + o(1)) \HH^{N-1}(  K_n \EEE  \cap A_{s,1}). 
    \end{equation*}
    Similarly, for $0 <  r < s  \EEE < 1$, we observe that for all $n$ sufficiently large
    \begin{equation*}
        (A_{s,1})_{\delta_n}  =  \big\{x \in B(0,1) \colon \, \mathrm{dist}(x,A_{s,1}) < c \delta_n\rbrace \subset  A_{r,1} \EEE
    \end{equation*}
    and $B(0,1) \setminus B(0,1-\sqrt{\delta_n}) \subset  A_{r,1}$, \EEE so that again by (\ref{lab:K2}) we have
        \begin{align*}
            \int_{ A_{s,1}} \CC{e(\tilde{u}_n)} \dd{x}   &\leq (1 + c \delta^s) \int_{ A_{r,1}} \CC{e({u}_n)} \dd{x} + c \sqrt{\delta} \int_{ A_{r,1}} \CC{e({u}_n)} \dd{x}\notag\\
                                                    &\leq (1 + o(1)) \int_{ A_{r,1}} \CC{e({u}_n)} \dd{x}. 
        \end{align*}
    According to (\ref{lab:K3}), the sets $(\tilde{\omega}_n)_n \subset B(0,1)$ are such that $\abs{\tilde{\omega}_n} \to 0$ and
    \begin{equation}\label{eq_H3}
        \text{$(\tilde{u}_n - u_n)  \chi_{B(0,1) \EEE \setminus \tilde{\omega}_n} \to 0$ in $L^2(B(0,1);\R^{N})$.}
    \end{equation}
    Property (\ref{lab:K1}) shows that for any fixed $t \in (0,1)$, the functions $\tilde{u}_n$ belong to $C^\infty(B(0,t);\R^N)$ for all $n$ sufficiently large. Since $e(\tilde{u}_n)$ is uniformly bounded in $L^2$ by {(\ref{lab:K2})}, \EEE we deduce that, up to extracting a subsequence, there exist a sequence of rigid motions $a_n$ and a function $u \in W^{1,2}(B(0,t);\R^N)$ such that
    \begin{equation}\label{eq_H1}
        \text{$\tilde{u}_n - a_n \to u$ in $L^2(B(0,t);\R^N)$}
    \end{equation}
    and
    \begin{equation}\nonumber 
        \text{$e(\tilde{u}_n) \rightharpoonup e(u)$ in $L^2(B(0,t);\R^{N\times N})$}.
    \end{equation}
    Using \eqref{eq_H3}, \eqref{eq_H1}, and $|\tilde \omega_n|\to 0$, we conclude that
    \begin{equation*}
        \text{$(u_n - a_n)  \chi_{B(0,1) \EEE \setminus \tilde{\omega}_n} \to u$ in $L^2(B(0,t);\R^{N})$},
    \end{equation*}
    and in particular, $u_n - a_n$ converges to $u$ in measure in $B(0,t)$.

    As in Remark \ref{rmk:nestBalls}, we note that, if we apply the above argument in $B(0,t)$ and subsequently in $B(0,t')$, where {$t' \in (t,1)$}, we may take the sequence of rigid motions $(a_n)_n$ (now associated to a subsequence of a subsequence) and the limit function $u$ to be the same in each ball.  
    Diagonalizing on a sequence $t\uparrow 1$, we may find   $u \in W^{1,2}(B(0,1);\R^N)$  and \EEE a subsequence (not relabeled) such that
    \begin{align*}
        \tilde{u}_n - a_n \to u\text{ in }L^2_{\mathrm{loc}}(B(0,1);\R^{N}) \quad \text{ and  } \quad 
        e(\tilde{u}_n) \rightharpoonup e(u)\text{ in }L^2_{\mathrm{loc}}(B(0,1);\R^{N\times N}),
    \end{align*}
    and $u_n - a_n$ converges locally in measure to $u$ in $B(0,1)$.     Finally, as $e(\tilde{u}_n) \rightharpoonup e(u)$ in $L^2_{\mathrm{loc}}(B(0,1);\R^{N\times N})$, for all  $r \in (0,1)$ \EEE we get
    \begin{equation*}
        \int_{B(0,r)} \CC{e(u)} \dd{x} \leq \liminf_{n \to \infty} \int_{B(0,r)} \CC{e(\tilde{u}_n)} \dd{x},
    \end{equation*}
    and applying \eqref{the elastic} for $\Omega =  B(0,r) \EEE $,
   we also have for all  $s \in (r,1]$ \EEE that
    \begin{equation*}
        \liminf_{n \to \infty} \int_{B(0,r)} \CC{e(\tilde{u}_n)} \dd{x} \leq \liminf_{n\to \infty} \int_{B(0,s)} \CC{e({u}_n)} \dd{x}.
    \end{equation*}
    We therefore find 
    \begin{equation*}
        \int_{B(0,r)} \CC{e(u)} \dd{x} \leq \liminf_{n \to \infty} \int_{B(0,s)} \CC{e(u_n)} \dd{x},
    \end{equation*}
    and we can let $r   \uparrow \EEE s$ on the left-hand side to prove the last item of the  statement.
\end{proof}

\section*{Acknowledgements} 

This work was supported by the DFG project FR 4083/5-1 and by the Deutsche Forschungsgemeinschaft (DFG, German Research Foundation) under Germany's Excellence Strategies EXC 2044 -390685587, Mathematics M\"unster: Dynamics--Geometry--Structure and EXC 2047/1 - 390685813, and project 211504053 - SFB 1060.
The second author was also funded by the French National Research Agency (ANR) under grant ANR-21-CE40-0013-01 (project GeMfaceT).

\appendix

\section{Pointwise definition of \texorpdfstring{$GSBD$}{GSBD} quasiminimizers}\label{sec:appendix}
  In this section, \EEE we justify that a $GSBD$ quasiminimizer can be defined pointwise everywhere outside of the crack set.
    We note that, due to the error terms in the definition of quasiminimizers (coming from the gauge $h$), the displacement field does not solve an elliptic PDE away from the crack set and standard regularity theory does not show that $u$ is smooth.
    In the scalar case, using the upper bound on the elastic energy,  namely \EEE
    \begin{equation*}
        \int_{B(x_0,r)} \abs{\nabla u}^2 \dd{x} \leq C r^{N-1},
    \end{equation*}
    and {\sc Campanato}'s characterization of H\"older spaces, one can deduce that $u$ is H\"older regular in any small ball $B(x,r)$ for which $\mathcal{H}^{N-1}(B(x,r)\cap J_u) = 0$, which leads to the stronger conclusion $B(x,r)\cap J_u = \emptyset$.
    In the vectorial case, {{similar} reasoning shows} that every point outside of the crack set is a Lebesgue point,  see also \cite[Proof of Theorem 5.7, Part 2]{CC}. \EEE 
    As we are interested in the behavior of the displacement $u$ ``away from the crack", we consider a ball $B(x_0,r)$ with $\mathcal{H}^{N-1}(B(x_0,r)\cap J_u) = 0$. By slicing, this implies  that \EEE $u$ is a Sobolev function. By rescaling (see Remark \ref{rmk_scaling})   we may replace $B(x_0,r)$ by the unit ball $B(0,1)$.
    \begin{proposition}\label{prop_campanato}
        Let $u \in W^{1,2}_{\mathrm{loc}}(B(0,1);\R^N)$ and assume that there exists a constant $C \geq 1$ such that for any ball $B(x_0,r) \subset B(0,1)$, we have
        \begin{equation}\label{eq_campanato}
            \int_{B(x_0,r)} \abs{e(u)}^2 \dd{x} \leq C r^{N-1}.
        \end{equation}
        Then, for every $x_0 \in B(0,1)$, there exists $b_{x_0} \in \R^N$ such that
        \begin{equation*}
            \lim_{r \to 0} \fint_{B(x_0,r)} \abs{u - b_{x_0}}^2 \dd{x} = 0.
        \end{equation*}
        In the language of $BV$, this implies {$J_u = S_u = \emptyset$}.
    \end{proposition}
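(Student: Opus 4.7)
The argument is a Campanato-type iteration on concentric balls combined with the Korn--Poincar\'e inequality. The point is that the assumed decay $\int_{B(x_0,r)}|e(u)|^2\,dx\le Cr^{N-1}$ is too weak to give H\"older regularity of $u$ itself, but is strong enough to force the translation part of a canonical rigid-motion approximation on $B(x_0,r)$ to converge.

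\emph{Step 1 (Rigid-motion approximants).} For every ball $B(x_0,r)\subset B(0,1)$, Korn--Poincar\'e produces a rigid motion $a_r(x)=A_r(x-x_0)+b_r$ with $A_r$ skew-symmetric and $b_r\in\R^N$ such that
\begin{equation*}
\int_{B(x_0,r)}|u-a_r|^2\,dx\le Cr^2\int_{B(x_0,r)}|e(u)|^2\,dx\le Cr^{N+1},
\end{equation*}
so $\fint_{B(x_0,r)}|u-a_r|^2\,dx\le Cr$.

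\emph{Step 2 (Convergence of the translations).} Comparing $a_r$ and $a_{r/2}$ on $B(x_0,r/2)$ via the triangle inequality yields $\fint_{B(x_0,r/2)}|a_r-a_{r/2}|^2\,dx\le Cr$. Since $\fint_{B(x_0,r/2)}(x-x_0)\,dx=0$, the mean of the affine function $a_r-a_{r/2}$ over $B(x_0,r/2)$ equals $b_r-b_{r/2}$. Jensen's inequality gives
\begin{equation*}
|b_r-b_{r/2}|^2\le\fint_{B(x_0,r/2)}|a_r-a_{r/2}|^2\,dx\le Cr.
\end{equation*}
Taking $r_k=2^{-k}r_0$ and summing the resulting geometric bound $|b_{r_k}-b_{r_{k+1}}|\le C\,2^{-k/2}\sqrt{r_0}$ shows that $(b_{r_k})_k$ is Cauchy, with limit $b_{x_0}$ and rate $|b_{r}-b_{x_0}|\le C\sqrt{r}$.

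\emph{Step 3 (Controlled growth of the skew part).} The $L^2(B(x_0,r/2))$-orthogonality between constants and centered linear maps gives $\fint_{B(x_0,r/2)}|a_r-a_{r/2}|^2\,dx=|b_r-b_{r/2}|^2+c_N(r/2)^2|A_r-A_{r/2}|^2$, and therefore $|A_r-A_{r/2}|\le Cr^{-1/2}$. Summing geometrically, $|A_r|\le Cr^{-1/2}$ (plus a bounded contribution from the initial scale). Combining Steps 1--3,
\begin{equation*}
\fint_{B(x_0,r)}|u-b_{x_0}|^2\,dx\le 3\fint_{B(x_0,r)}|u-a_r|^2\,dx+3c_Nr^2|A_r|^2+3|b_r-b_{x_0}|^2\le Cr\xrightarrow[r\to 0]{}0,
\end{equation*}
which identifies $b_{x_0}$ as the approximate limit of $u$ at $x_0$. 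Applied at every $x_0\in B(0,1)$, this shows $u$ has an approximate limit everywhere, so $S_u=\emptyset$ and hence $J_u=\emptyset$.

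\emph{Main obstacle.} The delicate point is that the skew part $A_r$ genuinely can blow up like $r^{-1/2}$ as $r\to 0$, so one cannot hope to promote this Lebesgue-point statement to H\"older continuity. What saves the argument is the cancellation between the $r^2$ factor and the $r^{-1}$ growth of $|A_r|^2$: the affine correction $A_r(x-x_0)$ averaged on $B(x_0,r)$ contributes $Cr$, matching the scale of the Korn--Poincar\'e error, which is precisely what is needed (and all that is needed) for the Lebesgue-point conclusion.
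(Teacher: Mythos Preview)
Your proposal is correct and follows essentially the same Campanato-type iteration as the paper: Korn--Poincar\'e produces rigid motions $a_r=A_r(\cdot-x_0)+b_r$ with error $O(r)$ in mean, comparing consecutive scales shows $(b_r)$ is Cauchy with rate $r^{1/2}$ while $|A_r|$ grows at most like $r^{-1/2}$, and combining the three contributions gives the Lebesgue-point conclusion. The only cosmetic differences are that the paper packages the separation of the translation and skew parts into a single equivalence-of-norms inequality rather than invoking Jensen and $L^2$-orthogonality, and it carries the comparison for all $s\in[r/2,r]$ rather than just the dyadic sequence (you should make sure your final bound $|b_r-b_{x_0}|\le C\sqrt{r}$ and the subsequent limit are stated for all small $r$, not just $r=2^{-k}r_0$, which is routine).
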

    \begin{proof}
        We use $C$ as a generic constant greater than $1$ that depends only on $N$ and the constant in \eqref{eq_campanato}.
        We recall that for all balls $B(x_0,r) \subset \R^N$ and for all linear  maps $a(x)  = A (x - x_0) + b$, we have
        \begin{equation}\label{haha}
            C^{-1} \left(\abs{b}^2 + r^2 \abs{A}^2\right) \leq \fint_{B(x_0,r)} \abs{a(x)}^2 \dd{x} \leq C \left(\abs{b}^2 + r^2 \abs{A}^2\right).
        \end{equation}
        by equivalence of norms in a finite dimensional space.
        \EEE
        Now, let us fix $x_0 \in B(0,1)$ and $R > 0$ such that $B(x_0,R) \subset B(0,1)$.
        For $0 < r \leq R$,  the Korn-Poincaré inequality and \eqref{eq_campanato} tell us that
        \begin{equation}\label{eq_Bu}
            \fint_{B(x_0,r)} \abs{u - a_{x_0,r}}^2 \dd{x} \leq C r^2 \fint_{B(x_0,r)} \abs{e(u)}^2 \dd{x} \leq C r,
        \end{equation}
        where the rigid motion $a_{x_0,r}(x)   = A_{x_0,r}(x - x_0) +  b_{x_0,r}$ is defined by
        $${A_{x_0,r} : = \fint_{B(x_0,r)} \frac{\nabla u - \nabla u^T}{2} \dd{x} \quad \text{ and }\quad   b_{x_0,r} : =\EEE\fint_{B(x_0,r)} (u(x)-A(x-x_0))\dd{x} .}$$
         We consider $0 < s\leq r \leq R$. If $s \in [r/2,r]$, we estimate
        \begin{equation*}
            \int_{B(x_0,s)} \abs{a_{x_0,s} - a_{x_0,r}}^2 \dd{x} \leq C \int_{B(x_0,s)} \abs{u - a_{x_0,s}}^2 \dd{x} + C \int_{B(x_0,r)} \abs{u - a_{x_0,r}}^2 \dd{x} \leq C r^{N+1}
        \end{equation*}
        and therefore $\abs{b_{x_0,s} - b_{x_0,r}} \leq C r^{1/2}$ and $\abs{A_{x_0,s} - A_{x_0,r}} \leq C r^{-1/2}$  by \eqref{haha}. \EEE 
        Next, for any $s \in (0,r]$, there exists an integer $k \geq 0$ such that $s \in [2^{-(k+1)}r, 2^{-k}r]$.
        According to  the \EEE previous estimate, we have
        \begin{equation*}
            \abs{b_{x_0,s} - b_{x_0,2^{-k}r}} \leq C (2^{-k}r)^{1/2}    
        \end{equation*}
        and, for all $0 \leq i < k$,
        \begin{equation*}
            \abs{b_{x_0,2^{-(i+1)}r} - b_{x_0,2^{-i}r}} \leq C (2^{-i} r)^{1/2},
        \end{equation*}
        whence
        \begin{equation}\label{eq_b_decay}
            \abs{b_{x_0,s} - b_{x_0,r}} \leq C \sum_{i \leq k} (2^{-i}r)^{1/2} \leq C r^{1/2}.
        \end{equation}
        A similar iteration argument show that for all $s \in (0,r)$  it holds that \EEE
        \begin{equation}\label{eq_a_decay}
            \abs{A_{x_0,s} - A_{x_0,r}} \leq C \sum_{i \leq k} (2^{-i}r)^{-1/2} \leq C s^{-1/2}.
        \end{equation}
        By \eqref{eq_b_decay}, $b_{x_0,r}$ is a Cauchy sequence, and we see that the limit $b_{x_0} := \lim_{r \to 0} b_{x_0,r}$ exists, and in particular,
        \begin{equation}\label{eq_Bb}
            \lim_{r \to 0} \fint_{B(x_0,r)} \abs{b_{x_0,r} - b_{x_0}}^2 \dd{x} = 0.
        \end{equation}
Inequality \eqref{eq_a_decay} shows that $\abs{A_{x_0,r}} \leq C r^{-1/2} + \abs{A_{x_0,R}}$ for all $0 < r \leq R$, and in conjunction with (\ref{eq_Bb}), we have
        \begin{equation}\label{eq_Ba}
            \limsup_{r \to 0} \fint_{B(x_0,r)} \abs{a_{x_0,r} - b_{x_0}}^2 \dd{x} \leq C \limsup_{r \to 0} r^2 \abs{A_{x_0,r}}^2 = 0.
        \end{equation}
        Combining \eqref{eq_Bu} and \eqref{eq_Ba}, we see that
        \begin{equation*}
            \lim_{r \to 0} \fint_{B(x_0,r)} \abs{u - b_{x_0}}^2\dd{x} = 0.
        \end{equation*}
         This concludes the proof. \EEE
    \end{proof}


\typeout{References}

\end{document}